\journal{\textsc{Fractals}-Complex Geometry, Patterns and Scaling in Nature and Society}
\DeclareMathAlphabet{\mathpzc}{OT1}{pzc}{m}{it}
\DeclareMathOperator*{\supp}{supp}
\DeclareMathOperator*{\cl}{cl}
\DeclareMathOperator*{\card}{card}
\newtheorem{theorem}{Theorem}
\newtheorem{lemma}[theorem]{Lemma}
\newtheorem{corollary}[theorem]{Corollary}
\newtheorem{proposition}[theorem]{Proposition}
\newdefinition{definition}{Definition}
\newdefinition{hypothesis}{Hypothesis}
\newdefinition{remark}{Remark}
\newproof{proof}{Proof}
\newdefinition{example}[theorem]{Example}
\def\del{\boldsymbol{\partial } }
\def\defining{\overset{\mathbf{def} } =} 
\def\ind{\boldsymbol{\mathbbm{1} } }
\def\wconv{\overset{w}\rightharpoonup}
\def\K{\mathcal{K}}
\def\D{\mathcal{D} }
\def\In{\mathcal{I}_{n} }
\def\N{\boldsymbol{\mathbbm{N} } }
\def\R{\boldsymbol{\mathbbm{R} } }
\def\H{\mathcal{H}}
\def\nk{n_{k} }
\def\B{\mathfrak{B} }
\def\Bnot{\mathfrak{B}_{0} }
\def\Bn{\mathfrak{B}_{n} }
\def\Bnk{\mathfrak{B}_{n_{k} } }
\def\Bk{\mathfrak{B}_{k} }
\def\Bkl{\mathfrak{B}_{k -1} }
\def\b{\mathfrak{b} }
\def \bnot{\mathfrak{b}_{0} }
\def\Vb{V_{ \B}}
\def\pn{p_{n} }
\def\pnk{p_{n_{k}} }
\def\Tnot{T_{ \bnot}}
\def\Teps{(\Tnot\, \rho)_{ \epsilon}}
\def\Ueps{U_{ \epsilon}}
\begin{document}

\begin{frontmatter}

\title{A Discussion on the Transmission Conditions
for Saturated Fluid Flow Through
Porous Media With Fractal Microstructure}
\tnotetext[mytitlenote]{This material is based upon work supported by project HERMES 27798 from Universidad Nacional de Colombia,
Sede Medell\'in.}

\author{Fernando A Morales \sep $\&$ \sep Luis C Aristiz\'abal 
}
\address{Escuela de Matem\'aticas
Universidad Nacional de Colombia, Sede Medell\'in \\
Calle 59 A No 63-20 - Bloque 43, of 106,
Medell\'in - Colombia}

\author[mymainaddress]{Fernando A Morales}

\cortext[mycorrespondingauthor]{Corresponding Author}
\ead{famoralesj@unal.edu.co}


\begin{abstract}
The present work is aimed to find suitable exchange conditions for saturated fluid flow in a porous medium, when a fractal microstructure is embedded in the porous matrix. Two different deterministic models are introduced and rigorously analyzed. Also, numerical experiments for each of them are presented to verify the theoretically predicted behavior of the phenomenon and some probabilistic versions are explored numerically, to gain further insight on the phenomenon.
\end{abstract}

\begin{keyword}
Coupled PDE Systems, Fractal Interface, Porous Media.
\MSC[2010] 35Q35 \sep 37F99  \sep 76S05
\end{keyword}

\end{frontmatter}


\section{Introduction}
%
%
An important topic of interest in the analysis of saturated flow through porous media, is modeling the phenomenon considering an embedded microstructure in the rock matrix. Some achievements in the preexisting literature address the case when the microstructure is periodic: see \cite{ShowalterMicro, ShowWalk90, ShowWalk91} for the analytical approach and \cite{ArbBrunson2007, ArbLehr2006} for the numerical point of view. From a different perspective \cite{Morales3} presents the homogenization analysis for a non-periodic fissured system where the geometry of the cracks' surface satisfies $C^{1}$-smoothness hypotheses. However, none of these mathematical analysis accomplishments, takes in consideration a fractal geometric structure embedded in the porous medium, which is an important case due to the remarkable evidence of such presence. In particular, in \cite{KatzThompson} the authors found that pore space and pore interface have fractal features. See \cite{ZhengShiChen} for a pore structure characterization, including random growth models.  See  \cite{FederJosang} for fractal geometry results in tracing experiments within a porous medium, including dispersion, fingering and percolation. Finally, \cite{ZhangLiTangGuo} discusses the use of fractal surfaces in modeling the storage phenomenon in gas reservoirs.  

The current paper concentrates on finding adequate fluid transmission conditions for flow in porous media with an embedded fractal microstructure, as well as the well-posedness of the corresponding weak variational formulations. The goal of the work is to ``blend" the modeling of porous media flow with the fractal roughness of the microstructure. It differs from the previously mentioned achievements since the geometric feature of periodicity is replaced by that of self-similarity and it explores numerically, the effect of some randomness in the fractal geometry. On the other hand, the present study has a very different approach to the analysis on fractals from the preexisting literature, given that the mainstream PDE analysis on fractals concentrates its efforts in solving strong forms on the fractal domain \cite{Strichartz3}, the analysis of the associated eigenvalues and eigenfunctions \cite{BarlowKigami}, or determining the adequate function spaces \cite{Strichartz1}. 

In order to gain understanding of the phenomenon's key features, the study is limited to the 1-D setting, defining the domain of analysis as $\Omega\defining(0,1)$. Additionally, an adjustment of the classic stationary diffusion problem \eqref{Eqn Darcy Classic} below will be used, in order to introduce the fluid exchange transmission conditions across the fractal interface

%
%
%
\begin{align}\label{Eqn Darcy Classic}
& -\del \big(K\, \del p\big) = F &
& \text{in }\, \Omega \, , &
& p(0) = 0 \, ,    &
& \del p (1) = 0\, .
\end{align}
%
%
%
Here, $p$ stands for the pressure, $K\del p$ indicates the flux according to Darcy's law and $K$ denotes de permeability, which will be set as $K \equiv 1$ throughout this work. In addition, Dirichlet and Neumann boundary conditions are adopted on the extremes of the interval. Classical notation and results on function spaces $L^{2}(\Omega), H^{1}(\Omega)$ are used and standard letters $p, q, r, u, v$ denote the functions on these spaces. The letters $\B$, $\b$ stand for the fractal microstructure and its elements respectively. In particular, the following classical Hilbert space will be frequently used
\begin{subequations}\label{Eqn Hilbert Space on Fractal}
\begin{equation}\label{Eqn Functions Hilbert Space on Fractal}
\ell^{\,2}(\B)\defining \Big\{g: \B\rightarrow \R\, \big\vert \sum\limits_{\b\, \in\, \B}\vert g(\b) \vert^{2} < +\infty\Big\}\, ,
\end{equation}
endowed with its natural inner product
\begin{equation}\label{Eqn Inner Product Hilbert Space on Fractal}
\big\langle g, h \big\rangle_{\ell^{2}(\B)} \defining \sum\limits_{\b\, \in\, \B} g(\b) \, h(\b).
\end{equation}
\end{subequations}
In the next section, the fractal interface geometry is introduced, together with the adequate mathematical setting, in order to include it successfully in the PDE model.  
\subsection{Geometric Setting}
%
%
Throughout this work we limit to a particular type of fractal microstructure, first we introduce a definition and a related result (see \cite{Falconer})
\begin{definition}[Iterated Function Systems]
Let $D\subseteq \R^{\! N}$ be a closed set 
\begin{enumerate}[(i)]
\item A function $S: D\rightarrow D$ is said to be a \textbf{contraction} if there exists a constant $c\in[0, 1)$ such that $\big\vert S(x) - S(y) \big\vert\leq c \, \big\vert x - y \big\vert$ for all $x, y\in D$. Additionally, we say that $S$ is a \textbf{similarity} if $\big\vert S(x) - S(y) \big\vert = c \, \big\vert x - y \big\vert$ for all $x, y\in D$ and $c$ is said to be its \textbf{ratio}.

\item A finite family of contractions, $\{S_{1}, S_{2}, \ldots, S_{L} \}$ on $D$ with $L\geq 2$ is said to be an \textbf{iterated function system} or \textbf{IFS}.

\item A non-empty compact subset $F\subseteq D$ is said to be an \textbf{attractor} of the IFS $\{S_{1}, S_{2}, \ldots, S_{L} \}$ if 
\begin{equation}\label{Def Attractor of an IFS}
F = \bigcup_{i \, = \, 1}^{L}S_{i}(F)\, .
\end{equation}
In particular, if every contraction of the IFS is a similarity then, the attractor $F$ is said to be a \textbf{strictly self-similar} set. 
\end{enumerate}
\end{definition}
\begin{theorem}
Consider the IFS given by the contractions $\{S_{1}, S_{2}, \ldots, S_{L} \}$ on $D\subseteq \R^{\! N}$ then, there is a unique attractor $F\subseteq D$ satisfying the identity \eqref{Def Attractor of an IFS}.
\end{theorem}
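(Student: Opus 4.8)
The plan is to recognize this as the Hutchinson–Banach fixed point argument: the attractor will be obtained as the unique fixed point of the ``union map'' associated to the IFS, acting on a suitable complete metric space of sets. Concretely, let $\K(D)$ denote the collection of all non-empty compact subsets of $D$, and equip it with the Hausdorff metric
\begin{equation*}
d_{\H}(A,B)\defining \max\Big\{\, \sup_{a\in A}\dist(a,B)\, ,\ \sup_{b\in B}\dist(b,A)\, \Big\}\, .
\end{equation*}
The first step is to check that $\big(\K(D), d_{\H}\big)$ is a complete metric space: that $d_{\H}$ is genuinely a metric is routine, while completeness uses that $D$ is closed in $\R^{\!N}$ (hence complete) — given a $d_{\H}$-Cauchy sequence $(A_n)$, one shows the candidate limit $A\defining \bigcap_{m}\cl\big(\bigcup_{n\geq m}A_n\big)$ is non-empty, compact, contained in $D$, and that $A_n\to A$ in $d_{\H}$.

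The second step is to define the Hutchinson operator $\mathcal{S}:\K(D)\to\K(D)$ by $\mathcal{S}(A)\defining \bigcup_{i=1}^{L}S_i(A)$ and verify it is well defined: each $S_i$ is continuous (being a contraction), so $S_i(A)$ is compact whenever $A$ is, and a finite union of compact sets is compact; moreover $S_i(A)\subseteq D$ since $S_i:D\to D$. The third step is the contraction estimate. Here I would use two elementary lemmas about the Hausdorff metric: (a) $d_{\H}\!\big(\bigcup_i A_i,\bigcup_i B_i\big)\leq \max_i d_{\H}(A_i,B_i)$, and (b) if $S$ is a contraction with constant $c$ then $d_{\H}\big(S(A),S(B)\big)\leq c\, d_{\H}(A,B)$, which follows directly from the pointwise estimate $|S(x)-S(y)|\leq c|x-y|$. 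Combining these with $c\defining \max\{c_1,\dots,c_L\}<1$ yields $d_{\H}\big(\mathcal{S}(A),\mathcal{S}(B)\big)\leq c\, d_{\H}(A,B)$, so $\mathcal{S}$ is a contraction on $\K(D)$.

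The final step is to invoke the Banach fixed point theorem on the complete metric space $\big(\K(D),d_{\H}\big)$: it provides a unique $F\in\K(D)$ with $\mathcal{S}(F)=F$, i.e. $F=\bigcup_{i=1}^{L}S_i(F)$, which is exactly \eqref{Def Attractor of an IFS}, and $F$ is non-empty and compact by construction; uniqueness among non-empty compact sets is immediate from the uniqueness clause of the fixed point theorem. I expect the main obstacle to be the completeness of $\big(\K(D),d_{\H}\big)$ — identifying the correct candidate limit of a Cauchy sequence of compact sets and proving it is compact and is actually the $d_{\H}$-limit is the only part requiring genuine care; the operator bounds and the fixed point application are then essentially formal. (One could alternatively quote the completeness of the hyperspace of a complete metric space from the literature, e.g. \cite{Falconer}, and keep only the verification that $\mathcal{S}$ is a self-map and a contraction.)
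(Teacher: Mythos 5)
Your proposal is correct: the paper itself offers no proof, simply citing \emph{Theorem} 9.1 of \cite{Falconer}, and the argument you outline — the Hutchinson operator on the complete metric space of non-empty compact subsets with the Hausdorff metric, followed by the Banach fixed point theorem — is precisely the proof given in that reference. The steps you flag as needing care (completeness of the hyperspace, the contraction estimate for the union map) are exactly the right ones, so nothing is missing.
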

\begin{proof}
See \textsc{Theorem} 9.1 \cite{Falconer}.
\qed
\end{proof}
\begin{remark}\label{Rem The Attractor}
In this work, the attractor $F$ of an IFS will prove to be important in an indirect way: not for the definition of a microstructure, but for analyzing the nature of the attained conclusions.  
\end{remark}
It is a well-known fact that under certain conditions (see \cite{Falconer}, \textsc{Lemma} 9.2) a strictly self-similar set $F$ has both, Hausdorff and Box dimensions which are equal (see \cite{Falconer}, \textsc{Theorem} 9.3), namely $d$. Moreover, if $c_{i}$ is the ratio of the similarity $S_{i}$, then 
\begin{equation}\label{Eq Dimension of Strictly Self-Similar Sets}
\sum\limits_{i\, =\, 1}^{L} c_{i}^{d} = 1\, .
\end{equation}
From now on, we limit our attention to fractal structures satisfying strict self-similarity. 
Finally, we introduce the type of microstructure to be studied in throughout this work.
\begin{definition}[Fractal Microstructure]\label{Def Fractal Microstructure}
We say that a set $\B\subseteq [0,1]$ is a \textbf{fractal microstructure} if it is countable and there exists a sequence of finite subsets $\{\B_{n}: n \geq 0\}$, together with an iterated function system of similarities $\{S_{i}: 1\leq i\leq L \}$ on $[0,1]$, satisfying the following conditions
\begin{enumerate}[(i)]

\item The set $\B_{0}$ is finite and $\B_{n}$ is recursively defined by
\begin{align}\label{Eq Fractal Microstructure}
& \B_{n} = \bigcup\limits_{i\,=\, 1}^{L} S_{i}(\B_{n - 1})\, ,  &
& \text{for all }\; n\in \N\, .
\end{align}

\item The sequence of sets is monotonically increasing and
%
%
\begin{equation}\label{Eq sigma development}
\Bn\underset{n}\uparrow \B\, .
\end{equation}
%
%

\end{enumerate}
In the following, we refer to $\{\B_{n}: n\geq 0\}$ as a $\bm{\sigma}$\textbf{-finite development} of $\B$. 
\end{definition}
\begin{remark}\label{Rem Properties of the sigma finite development}
Let $\B\subseteq [0,1]$ be a fractal microstructure and let $\{S_{i}: 1\leq i\leq L \}$ be its corresponding system of similarities, notice the following
\begin{enumerate}[(i)]
\item There may exist more than one $\sigma$-finite development of $\B$.

\item Given a $\sigma$-finite development $\{\B_{n}: n \geq 0\}$ then, for each $n \in \N$, the following relationships of cardinality must hold
\begin{align}\label{Eq Cardinality Relationships}
& \card(\B_{n})  \leq  L\, \card(\B_{n-1}) \, , & 
& \card(\B_{n})  \leq  L^{n}\, \card(\B_{0}) \,   , & 
& \card(\B_{n} - \B_{n -1}) \leq  L^{n -1}(L -1) \, \card(\B_{0})  \,  .
\end{align}

\item The fractal microstructure $\B$ is necessarily contained in the unique fractal attractor $F$ of the IFS of similarities $\{S_{i}: 1\leq i\leq L \}$.
\end{enumerate}
\end{remark}
%
%
%
%
\section{Unscaled Storage Model for the Interface Microstructure}\label{Sec A sigma finite Interface Microstructure}
Let $\B$ be a microstructure set, let $\{\Bn:n\geq 0 \}$ be a $\sigma$-finite development and consider the following sequence of strong interface problems
\begin{subequations}\label{Pblm strong problem n-stage}
   \begin{equation}\label{Eq strong n-stage equation}   
      - \del^{2} \pn= F \quad \text{in}\; \left[\b_{k-1}, \b_{k}\right] .
    \end{equation}
    With interface conditions
    \begin{equation}\label{Eq strong n-stage interface}      
    \begin{split}
    \pn \left(\b_{k}^{-}\right) & = \pn\left(\b_{k}^{+}\right)  , 
    \\
    \del \pn \left(\b_{k}^{-}\right)  -  \del \pn \left(\b_{k}^{+}\right) + 
   \beta\,  \pn \left(\b_{k}\right) & =  f\left(\b_{k}\right) \, , 
    \quad \forall \; 1\leq k\leq K_{n} - 1 \, 
    \end{split}
    \end{equation}
    and boundary conditions
    \begin{align}\label{Eq strong n-stage boundary}  
      & \pn (0)= 0 \, , & 
      & \del \pn(1)= 0 \,.
    \end{align}     
\end{subequations}
Here, $0 = \b_{0} < \b_{1} < \b_{2}< \ldots < \b_{\scriptscriptstyle K_{n}} = 1$ is a monotone ordering of $\Bn$ i.e., $K_{n} = \card(\Bn)$. The forcing term $F$ belongs to $L^{2}(0,1)$, $\beta > 0$ is a storage fluid exchange coefficient, and $f$ is a source on the interface. In order to attain the variational formulation of the problems above we define the following function space
\begin{definition}\label{Def function space setting}
Define the space
\begin{equation}\label{Def space of pressures direct}
V \defining \big\{u\in H^{\,1}(0,1): u(0) = 0 \big\} ,
\end{equation}
endowed with the inner product $\langle \cdot, \cdot\rangle_{\scriptscriptstyle V}: V\times V\rightarrow \R$
\begin{equation}\label{Def norm space of pressures direct}
\langle u, v\rangle_{\scriptscriptstyle V}\defining \int_{0}^{1} \del u \, \del v \, ,
\end{equation}
and the norm $\Vert u\Vert_{\scriptscriptstyle V} \defining \sqrt{\langle u, u\rangle_{\scriptscriptstyle V}} $.
\end{definition}
The variational formulation of \textsc{Problem} \eqref{Pblm strong problem n-stage} above is given by 
\begin{align}\label{Pblm Variational n stage Concise}
& \pn \in V : & 
& \int_{0}^{1}\del \pn \, \del q 
+ \beta \sum_{\b \, \in  \, \Bn } \pn  (\b) \,  q(\b)
= \int_{0}^{1}F\,q 
+ \sum_{\b \, \in \, \Bn} f(\b) \,  q(\b)\, , &
& \forall \, q\in V.
\end{align}
\begin{theorem}\label{Th boundedness of the sequence of solutions}
The problem \eqref{Pblm Variational n stage Concise} is well-posed. Moreover if $f\in \ell^{2}(\B)$ the sequence of solutions $\{\pn: n\in \N\}$ is bounded in $V$ and for each $n\in \N$ it holds that
\begin{equation}\label{Ineq a-priori estimate pressures}
\begin{split}
\Vert \pn\Vert_{V},\, \Big(\sum_{\b \, \in \, \Bn} \pn^{2}(\b)\Big)^{ 1/2 } 
\leq 
%
\frac{1}{\min\{1, \beta\} }
\Big(\Vert F \Vert_{L^{2} (0,1)}^{2} +\Vert f\Vert^{2}_{\ell^{2}(\B)}\Big)^{ 1/2 } .
\end{split}
\end{equation}
\end{theorem}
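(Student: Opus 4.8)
The plan is to establish well-posedness through the Lax--Milgram lemma at each fixed stage $n$, and then to obtain \eqref{Ineq a-priori estimate pressures} by testing the variational identity against the solution itself.

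The starting point is the one-dimensional observation that every $u\in V$ satisfies $u(x) = \int_{0}^{x}\del u$, whence $|u(x)| \le \int_{0}^{1}|\del u| \le \Vert u\Vert_{V}$ for all $x\in[0,1]$ by the Cauchy--Schwarz inequality on the unit interval; integrating in $x$ yields the Poincar\'e-type bound $\Vert u\Vert_{L^{2}(0,1)} \le \Vert u\Vert_{V}$. In particular $\Vert\cdot\Vert_{V}$ is equivalent to the full $H^{1}(0,1)$-norm on $V$, so $(V,\langle\cdot,\cdot\rangle_{V})$ is a Hilbert space, and each point evaluation $u\mapsto u(\b)$ is a bounded linear functional on $V$. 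Fixing $n$, set $a_{n}(u,v)\defining \int_{0}^{1}\del u\,\del v + \beta\sum_{\b\in\Bn}u(\b)\,v(\b)$ and $\ell_{n}(q)\defining \int_{0}^{1}Fq + \sum_{\b\in\Bn}f(\b)\,q(\b)$. Continuity of $a_{n}$ holds since $\sum_{\b\in\Bn}|u(\b)||v(\b)| \le \card(\Bn)\,\Vert u\Vert_{V}\Vert v\Vert_{V}$; boundedness of $\ell_{n}$ follows from $|\int_{0}^{1}Fq| \le \Vert F\Vert_{L^{2}(0,1)}\Vert q\Vert_{V}$ and $|\sum_{\b\in\Bn}f(\b)q(\b)| \le \big(\sum_{\b\in\Bn}f(\b)^{2}\big)^{1/2}\sqrt{\card(\Bn)}\,\Vert q\Vert_{V}$, which is finite because $\Bn$ is a finite set. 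Coercivity is immediate, $a_{n}(u,u) = \Vert u\Vert_{V}^{2} + \beta\sum_{\b\in\Bn}u(\b)^{2} \ge \Vert u\Vert_{V}^{2}$, so Lax--Milgram provides a unique $\pn\in V$ solving \eqref{Pblm Variational n stage Concise} and depending continuously on $F$ and $f$.

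For the quantitative bound I would take $q=\pn$ in \eqref{Pblm Variational n stage Concise} and abbreviate $a\defining \Vert\pn\Vert_{V}$ and $b\defining \big(\sum_{\b\in\Bn}\pn^{2}(\b)\big)^{1/2}$. The left-hand side equals $a^{2}+\beta b^{2} \ge \min\{1,\beta\}(a^{2}+b^{2})$. On the right-hand side, $\int_{0}^{1}F\pn \le \Vert F\Vert_{L^{2}(0,1)}\Vert\pn\Vert_{L^{2}(0,1)} \le \Vert F\Vert_{L^{2}(0,1)}\,a$ by the Poincar\'e bound, while $\sum_{\b\in\Bn}f(\b)\pn(\b) \le \big(\sum_{\b\in\Bn}f(\b)^{2}\big)^{1/2}b \le \Vert f\Vert_{\ell^{2}(\B)}\,b$ since $\Bn\subseteq\B$; applying Cauchy--Schwarz in $\R^{2}$ to these two products bounds the right-hand side by $\big(\Vert F\Vert_{L^{2}(0,1)}^{2}+\Vert f\Vert_{\ell^{2}(\B)}^{2}\big)^{1/2}(a^{2}+b^{2})^{1/2}$. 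Hence $\min\{1,\beta\}(a^{2}+b^{2}) \le \big(\Vert F\Vert_{L^{2}(0,1)}^{2}+\Vert f\Vert_{\ell^{2}(\B)}^{2}\big)^{1/2}(a^{2}+b^{2})^{1/2}$, which is trivial when $a^{2}+b^{2}=0$ and otherwise gives $(a^{2}+b^{2})^{1/2} \le \frac{1}{\min\{1,\beta\}}\big(\Vert F\Vert_{L^{2}(0,1)}^{2}+\Vert f\Vert_{\ell^{2}(\B)}^{2}\big)^{1/2}$. Since $\max\{a,b\}\le(a^{2}+b^{2})^{1/2}$, this is precisely \eqref{Ineq a-priori estimate pressures}, and the bound being independent of $n$ shows that $\{\pn:n\in\N\}$ is bounded in $V$.

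The argument is largely routine; the two genuinely load-bearing facts are that in one dimension the traces $u\mapsto u(\b)$ are continuous on $V$ (so the formulation makes sense and the finite interface sums cause no difficulty), and the small bookkeeping device of collecting $\Vert\pn\Vert_{V}$ and the discrete trace norm into a single Euclidean norm, so that one application of Cauchy--Schwarz produces the constant $1/\min\{1,\beta\}$. No limiting or compactness argument enters at this stage --- only the uniform a-priori estimate.
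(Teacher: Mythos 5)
Your proposal is correct and follows essentially the same route as the paper: Lax--Milgram for well-posedness of each finite-stage problem, then testing with $\pn$ and applying Cauchy--Schwarz in $L^{2}(0,1)$, in the finite sum over $\Bn$, and in $\R^{2}$ to collect $\Vert\pn\Vert_{V}$ and the discrete trace norm into a single Euclidean quantity before dividing by $\min\{1,\beta\}$. The only cosmetic difference is that you derive the Poincar\'e-type bound with constant $1$ directly from $u(x)=\int_{0}^{x}\del u$, whereas the paper cites the sharper constant $\K_{(0,1)}\le 1/\sqrt{2}$; both suffice for the stated estimate.
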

\begin{proof}
Clearly, the bilinear form $(q, r)\mapsto \int_{0}^{1}\del q \, \del r 
+ \beta \sum\limits_{\b \, \in  \, \Bn } q (\b) \,  r(\b)$ is continuous and $V$-elliptic. In addition, it is direct to see that $q\mapsto \int_{0}^{1}F\,q + \sum\limits_{\b \, \in \, \Bn} f(\b) \,  q(\b)$ is linear and continuous. Consequently, the well-posedness of \textsc{Problem} \eqref{Pblm Variational n stage Concise} follows from the Lax-Milgram Theorem (see  \cite{Showalter77}). For the boundedness of $\{\pn: n\in \N \}$, we test \eqref{Pblm Variational n stage Concise} with $\pn$ and get
\begin{equation} \label{Ineq Cauchy - Schwartz in n-stage problem}
\begin{split}
\int_{0}^{1}(\del \pn)^{2} 
+ \beta \sum_{\b \, \in  \, \Bn }  \pn^{2}  (\b) &
= \int_{0}^{1}F\,\pn 
+ \sum_{\b \, \in \, \Bn} f(\b) \,  \pn(\b)  \\
& \leq \Vert F \Vert_{L^{2} (0,1)} \Vert \pn \Vert_{L^{2} (0,1)}
+ \bigg(\sum_{\b \, \in \, \Bn} f^{2}(\b)\bigg)^{1/2} \, 
\bigg(\sum_{\b \, \in \, \Bn} \pn^{2}(\b) \bigg)^{1/2}\\
& \leq
\bigg(\Vert F \Vert_{L^{2} (0,1)}^{2} + \sum_{\b \, \in \, \B} f^{2}(\b)\bigg)^{ 1/2 }
\bigg(\K^{2}_{(0,1)} \, \Vert \del \pn \Vert_{L^{2} (0,1)}^{2} + 
\sum_{\b \, \in \, \Bn} \pn^{2}(\b)\bigg)^{ 1/2 }.
\end{split}
\end{equation}
The second and third lines were obtained applying the Cauchy-Schwartz inequality in $L^{2}(0,1)$, $\R^{\card(\Bn) }$ and $\R^{2}$ respectively. Also, $\K_{(0,1)}$ is the Poincar\'e constant associated to the domain $(0,1)$; in this particular case $\K_{(0,1)}\leq \dfrac{1}{\sqrt{2}}$  (see \cite{Showalter77}). Thus, 
\begin{equation*} 
\begin{split}
\max \Big\{\Vert \pn\Vert_{V}, 
\bigg(\sum_{\b \, \in \, \Bn} \pn^{2}(\b)\bigg)^{ 1/2 } \Big\}
& \leq 
\Big(\int_{0}^{1}(\del \pn)^{2}  + \sum_{\b \, \in \, \Bn} \pn^{2}(\b)\Big)^{  1/2 } \\
& \leq \frac{1}{\min\{1, \beta\} }
\Big(\Vert F \Vert_{L^{2} (0,1)}^{2} +\Vert f\Vert^{2}_{\ell^{2}(\B)}\Big)^{ 1/2 },
\quad \forall\, n\in \N .
\end{split}
\end{equation*}
Hence, \textsc{Estimate} \eqref{Ineq a-priori estimate pressures} follows.
\qed
\end{proof}
In the theorem above, particularly due to the a-priori \textsc{Estimate} \eqref{Ineq a-priori estimate pressures}, we observe that the sequence of solutions $\{p_{n}: n\in \N\}\subseteq V$ contains extra information, which is given by the boundedness of the term $\sum\limits_{\b \, \in \, \Bn} \pn^{2}(\b)$. Implicitly, this fact gives the subspace of convergence.
\begin{definition}\label{Def Space of Convergence}
Define the \textbf{Fractal Interface Space}
\begin{equation}\label{Def fractal space pressures direct}
\Vb \defining \Big\{u\in H^{\,1}(0,1): u(0) = 0, \; 
\sum_{\b\,\in\, \B}\vert u(\b) \vert^{2} <\infty \Big\} ,
\end{equation}
endowed with the inner product $\langle \cdot, \cdot\rangle: \Vb\times\Vb\rightarrow \R$
\begin{equation}\label{Def norm fractal space of pressures direct}
\langle u, v\rangle_{\scriptscriptstyle \Vb}\defining \int_{0}^{1} \del u \, \del v 
+ \sum_{\b\,\in\, \B}u(\b)\, v(\b)
\end{equation}
and the norm $\Vert u\Vert_{\scriptscriptstyle \Vb}^{2} \defining \langle u, u\rangle_{\scriptscriptstyle \Vb} $.
\end{definition}
\begin{remark}\label{Rem the little ell 2 equivalence}
\begin{enumerate}[(i)]
\item In the following we refer to the trace operator $\gamma_{\scriptscriptstyle \B}: \Vb \rightarrow \R^{\!\B}$, $u\mapsto u\vert_{\B}$ as the \textbf{fractal trace operator} on $\B$, which is well-defined for functions $u\in V$.

\item
The space $\Vb$ is the subspace of functions $u \in V$ whose trace on the microstructure set, $u\vert_{\B}$, belongs to $\ell^{\, 2}(\B)$.

\item
The space $\Vb$ is a Hilbert space due to the completeness of $V$ with the norm $\Vert \cdot \Vert_{\scriptscriptstyle V} $ and the completeness of $\ell^{\, 2}(\B)$ with the norm $\big\{\sum\limits_{\b\,  \in \, \B} u^{2}(\b)\big\}^{ 1/2 }$.

\end{enumerate}

\end{remark}
\begin{theorem}\label{Th weak convergence of the whole sequence}
Let $\{\pn: n\in \N\}\subseteq V$ be the sequence of solutions to the family of \textsc{Problems} \eqref{Pblm Variational n stage Concise}. Then, if it converges weakly in $V$ to an element $\xi\in V$, the sequence of functionals
$\Lambda_{n}:\ell^{\, 2} (\B)\rightarrow \R$ defined by
\begin{equation}\label{Def partial development functionals}
\Lambda_{n}(u) \defining \sum_{\b \,\in \, \Bn} p_{n}(\b)\, u(\b)
= \sum_{\b \,\in \, \B} p_{n}(\b)\, u(\b) \, \ind_{\Bn}(\b)\,,\quad \forall \,  u\in \ell^{\, 2}(\B),
\end{equation}
converges weakly to $\xi\vert_{\B}$ in $(\ell^{2}(\B))'$. In particular, $\xi\in \Vb$ and 
\begin{equation}\label{Eq convergence of the fractal functional}
\sum_{\b \, \in \, \Bn}  \pn(\b) \, q(\b) \xrightarrow[n\rightarrow \infty]{}
\sum_{\b \, \in \, \B}  \xi(\b) \, q(\b) \, , \quad
\forall\, q\in \Vb.
\end{equation}
\end{theorem}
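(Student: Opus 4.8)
The plan is to unpack the asserted weak convergence in $(\ell^{2}(\B))'$ into pointwise convergence on $\ell^{2}(\B)$, prove it first on a dense subspace, and then upgrade it to all of $\ell^{2}(\B)$ by means of a uniform operator bound; this upgrade also pins the limit down uniquely, which is what makes the \emph{whole} sequence (not merely a subsequence) converge. First I would record that each $\Lambda_{n}$ from \eqref{Def partial development functionals} is well-defined on $\ell^{2}(\B)$ (the sum is finite) and uniformly bounded there: by Cauchy--Schwarz, $\vert\Lambda_{n}(u)\vert\le\big(\sum_{\b\in\Bn}\pn^{2}(\b)\big)^{1/2}\big(\sum_{\b\in\Bn}u^{2}(\b)\big)^{1/2}\le C\,\Vert u\Vert_{\ell^{2}(\B)}$, where $C$ is the right-hand side constant of the a-priori \textit{Estimate} \eqref{Ineq a-priori estimate pressures} of \textit{Theorem} \ref{Th boundedness of the sequence of solutions} (valid under the standing hypothesis $f\in\ell^{2}(\B)$) and $\sum_{\b\in\Bn}u^{2}(\b)\le\Vert u\Vert_{\ell^{2}(\B)}^{2}$ since $\Bn\subseteq\B$. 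Because $\ell^{2}(\B)$ is a separable Hilbert space, hence reflexive, weak convergence in $(\ell^{2}(\B))'$ coincides with weak-$*$ convergence; so the claim amounts exactly to showing $\Lambda_{n}(u)\to\langle\xi\vert_{\B},u\rangle_{\ell^{2}(\B)}$ for every $u\in\ell^{2}(\B)$.

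Next I would invoke the one-dimensional Sobolev embedding $V\subseteq H^{1}(0,1)\hookrightarrow C([0,1])$, which is continuous, so that for each $\b\in\B$ the point-evaluation $\delta_{\b}:v\mapsto v(\b)$ is a bounded linear functional on $V$; therefore $\pn\wconv\xi$ in $V$ forces $\pn(\b)\to\xi(\b)$ for every $\b\in\B$. Combined with the monotone exhaustion $\Bn\uparrow\B$ of \eqref{Eq sigma development}, a finitely supported $u:\B\to\R$ satisfies $\supp u\subseteq\Bn$ for all large $n$, whence $\Lambda_{n}(u)=\sum_{\b\in\supp u}\pn(\b)\,u(\b)\to\sum_{\b\in\B}\xi(\b)\,u(\b)$. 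Since finitely supported functions are dense in $\ell^{2}(\B)$ and the $\Lambda_{n}$ are uniformly bounded, a routine $3\varepsilon$ argument gives convergence of $\Lambda_{n}(u)$ for \emph{every} $u\in\ell^{2}(\B)$; calling the limit $\Lambda(u)$, linearity together with $\vert\Lambda(u)\vert\le C\Vert u\Vert_{\ell^{2}(\B)}$ yields $\Lambda\in(\ell^{2}(\B))'$, and by the Riesz representation theorem $\Lambda=\langle g,\cdot\rangle_{\ell^{2}(\B)}$ for a unique $g\in\ell^{2}(\B)$.

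Finally I would identify $g$ and close the argument: testing with the indicators $u=\ind_{\{\b\}}$ and using the pointwise convergence above gives $g(\b)=\Lambda(\ind_{\{\b\}})=\xi(\b)$ for every $\b\in\B$, so the fractal trace satisfies $\gamma_{\scriptscriptstyle\B}\xi=\xi\vert_{\B}=g\in\ell^{2}(\B)$; by \textit{Definition} \ref{Def Space of Convergence} and \textit{Remark} \ref{Rem the little ell 2 equivalence} this means $\xi\in\Vb$, and $\Lambda=\langle\xi\vert_{\B},\cdot\rangle_{\ell^{2}(\B)}$ is the claimed weak limit. (Alternatively $\xi\in\Vb$ follows directly from Fatou/monotone convergence: for fixed $N$ and all $n\ge N$ one has $\sum_{\b\in\B_{N}}\pn^{2}(\b)\le\sum_{\b\in\Bn}\pn^{2}(\b)\le C^{2}$; letting $n\to\infty$ in the finite left-hand sum and then $N\to\infty$ gives $\sum_{\b\in\B}\xi^{2}(\b)\le C^{2}$.) The special limit \eqref{Eq convergence of the fractal functional} is then the instance $u=q\vert_{\B}$, $q\in\Vb$, which is admissible precisely because $q\vert_{\B}\in\ell^{2}(\B)$. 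I expect the only genuinely delicate point to be securing convergence of the \emph{entire} sequence rather than an extracted subsequence; this is exactly why the limit is identified on the dense subspace of finitely supported functions (through continuity of the point evaluations) instead of via a Banach--Alaoglu extraction, while the uniform bound, the Riesz step, and the $\ell^{2}$-membership of $\xi\vert_{\B}$ are all routine once \eqref{Ineq a-priori estimate pressures} is available.
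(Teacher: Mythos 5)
Your proposal is correct, and it reaches the conclusion by a slightly different mechanism than the paper does. The shared core is identical: the uniform bound $\Vert\Lambda_{n}\Vert=\big(\sum_{\b\in\Bn}\pn^{2}(\b)\big)^{1/2}\leq C$ coming from \textit{Estimate} \eqref{Ineq a-priori estimate pressures}, the pointwise convergence $\pn(\b)\to\xi(\b)$ extracted from $\pn\wconv\xi$ in $V$ via the continuity of point evaluations, and the identification of the limit functional with $\xi\vert_{\B}$ by testing against the indicators $\ind_{\{\b\}}$. Where you diverge is in how whole-sequence convergence is secured: the paper extracts a weakly convergent subsequence of $\{\Lambda_{n}\}$ in the reflexive space $(\ell^{2}(\B))'$, identifies every subsequential limit as $\xi\vert_{\B}$, and concludes by the standard ``all subsequential limits coincide'' argument; you instead prove convergence of $\Lambda_{n}(u)$ directly on the dense subspace of finitely supported functions (where $\Bn\uparrow\B$ makes the sums eventually stabilize) and upgrade to all of $\ell^{2}(\B)$ by the uniform bound and a $3\varepsilon$ argument, then apply Riesz once to the limit functional. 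Your route avoids any compactness extraction and makes the uniqueness of the limit automatic rather than an afterthought; the paper's route is shorter to write because it does not need the density step. Your parenthetical Fatou argument for $\xi\in\Vb$ is also a clean self-contained alternative to deducing membership from the Riesz representer. Both arguments are complete; there is no gap in yours.
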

\begin{proof}
Clearly $\Lambda_{n}\in \ell^{\, 2} (\B) '$ for all $n\in \N$ and due to the Riesz Representation theorem, we have that
\begin{equation*} 
\Vert\Lambda_{n}\Vert = \Big(\sum_{\b \,\in \, \Bn} p_{n}^{2}(\b) \Big)^{1/2}
\leq \frac{M}{\min \{ 1, \beta \} }\quad \forall\, n\in \N ,
\end{equation*}
where the last inequality holds due to \textsc{Estimate} \eqref{Ineq a-priori estimate pressures}. Consequently, there must exist a subsequence $\{n_{k}: k\in \N \}$ and an element $\eta\in \ell^{\, 2}(\B)$ such that
\begin{equation*} 
\Lambda_{\nk}(u) =  \sum_{\b \,\in \, \Bnk} \pnk(\b)\, u(\b) \xrightarrow[k \, \rightarrow \, \infty]{} 
\sum_{\b \, \in \, \B} \eta (\b)\, u(\b)
\,,\quad \forall \,  u\in \ell^{ \, 2}(\B).
\end{equation*}
In particular, for any $\b_{0} \in \B$, let $K\in \N$ be such that $\b_{0}\in \Bnk$ for all $k>K$. Then, recalling that $\ind_{ \{\b_{0}\} } \in \ell^{\,2}(\B)$, the expression above yields
\begin{align*} 
& \pnk(\b_{0}) =  \sum_{\b \,\in \, \Bnk} \pnk(\b)\, \ind_{\{\b_{0}\}}(\b)  \xrightarrow[k \, > \, K]{} 
\sum_{\b \, \in \, \B} \eta (\b)\, \, \ind_{\{\b_{0}\}}(\b) = \eta (\b_{0})
\,, &
& \forall \; \b_{0}\in\B.
\end{align*}
On the other hand, since $\pn\wconv \xi$ this implies that $\pn(x_{0})\rightarrow \xi(x_{0})$ for all $x_{0} \in [0,1]$.  In particular $\lim\limits_{\, k \, \rightarrow \, \infty}\pnk(\b_{0})  = \xi(\b_{0})$ for all $\b_{0} \in \B$, consequently $\xi\vert_{\B} = \eta$ and  $\xi\in \Vb$. Moreover, since the above holds for any convergent subsequence of $\{\Lambda_{n}: n\in \N\}\subseteq (\ell^{\, 2}(\B))'$, it follows that the whole sequence is weakly convergent i.e.,
\begin{align*} 
& \Lambda_{n} \wconv \xi\vert_{\B}\, , &
&  \text{weakly in } (\ell^{\, 2}(\B))' .
\end{align*}
From here, the convergence statement \eqref{Eq convergence of the fractal functional} follows trivially.
\qed
\end{proof}
\begin{remark}\label{Rem partial interface development functionals}
It is important to observe that the weak convergence hypothesis for the functionals $\Lambda_{n}$ in $(\ell^{\, 2}(\B))'$, is a stronger condition than the statement \eqref{Eq convergence of the fractal functional}, since it can not be claimed that the fractal trace operator $\gamma_{\scriptscriptstyle \B}: \Vb \rightarrow \ell^{\, 2}(\B)$, $q\mapsto q\vert_{\B}$ is surjective. Moreover, it will be shown that in most of the interesting cases this operator is not surjective.  
\end{remark}
\subsection{The Limit Problem}\label{Sec the limit problem}
Consider the variational problem with microstructure interface 
\begin{align}\label{Pblm Variational Microstructure Concise}
& p \in \Vb : &
& \int_{0}^{1}\del p \, \del q 
+ \beta \sum_{\b \, \in  \, \B} p  (\b) \,  q(\b)= \int_{0}^{1}F\,q 
+ \sum_{\b \, \in \, \B} f(\b) \,  q(\b)\,, &
& \forall \, q\in \Vb.
\end{align}
Where, $F\in L^{2}(0,1)$ and $f\in \ell^{\, 2}(\B)$. We claim that the solution $p$ of the problem above is the weak limit of the sequence $\{\pn: n\in \N \}\subseteq V$ i.e., \textsc{Problem} \eqref{Pblm Variational Microstructure Concise} is the ``\textbf{limit}" of \textsc{Problems} \eqref{Pblm Variational n stage Concise}. 
\begin{theorem}\label{Th well-posedness fractal interface problem}
The problem \eqref{Pblm Variational Microstructure Concise} is well-posed.
\end{theorem}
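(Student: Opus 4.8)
The plan is to obtain the result as a direct application of the Lax--Milgram Theorem on the Hilbert space $\Vb$ introduced in \textit{Definition} \ref{Def Space of Convergence}, whose completeness was recorded in \textit{Remark} \ref{Rem the little ell 2 equivalence}(iii). Write $a(p,q) \defining \int_{0}^{1} \del p\,\del q + \beta\sum_{\b\in\B} p(\b)\,q(\b)$ for the bilinear form appearing on the left of \eqref{Pblm Variational Microstructure Concise} and $\Phi(q)\defining \int_{0}^{1} F\,q + \sum_{\b\in\B} f(\b)\,q(\b)$ for the right-hand side functional. The three things to check are that $a$ is continuous on $\Vb\times\Vb$, that $a$ is $\Vb$-elliptic, and that $\Phi\in\Vb'$.

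First I would verify continuity and ellipticity of $a$. Continuity follows from two applications of the Cauchy--Schwarz inequality (in $L^{2}(0,1)$ and in $\ell^{\,2}(\B)$, the latter guaranteeing in particular that the sum $\sum_{\b\in\B} p(\b)q(\b)$ is absolutely convergent for $p,q\in\Vb$) together with the elementary bound $xy+zw\leq\sqrt{x^{2}+z^{2}}\,\sqrt{y^{2}+w^{2}}$, which gives $|a(p,q)|\leq\max\{1,\beta\}\,\Vert p\Vert_{\Vb}\,\Vert q\Vert_{\Vb}$. Ellipticity is immediate from the definition of the $\Vb$-norm in \eqref{Def norm fractal space of pressures direct}, since $a(q,q)=\Vert\del q\Vert_{L^{2}(0,1)}^{2}+\beta\sum_{\b\in\B} q^{2}(\b)\geq\min\{1,\beta\}\,\Vert q\Vert_{\Vb}^{2}$.

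Next I would check that $\Phi$ is bounded on $\Vb$. Since every $q\in\Vb$ satisfies $q(0)=0$, the Poincar\'e inequality gives $\Vert q\Vert_{L^{2}(0,1)}\leq\K_{(0,1)}\,\Vert\del q\Vert_{L^{2}(0,1)}\leq\Vert q\Vert_{\Vb}$, while by construction of $\Vb$ one has $\big(\sum_{\b\in\B} q^{2}(\b)\big)^{1/2}\leq\Vert q\Vert_{\Vb}$; combining these with Cauchy--Schwarz exactly as in the proof of \textit{Theorem} \ref{Th boundedness of the sequence of solutions} yields $|\Phi(q)|\leq\big(\Vert F\Vert_{L^{2}(0,1)}^{2}+\Vert f\Vert_{\ell^{2}(\B)}^{2}\big)^{1/2}\,\Vert q\Vert_{\Vb}$, so $\Phi\in\Vb'$. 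With all hypotheses verified, Lax--Milgram delivers a unique $p\in\Vb$ solving \eqref{Pblm Variational Microstructure Concise}, together with the a priori bound $\Vert p\Vert_{\Vb}\leq\frac{1}{\min\{1,\beta\}}\big(\Vert F\Vert_{L^{2}(0,1)}^{2}+\Vert f\Vert_{\ell^{2}(\B)}^{2}\big)^{1/2}$.

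I do not anticipate a genuine obstacle here: every estimate mirrors one already carried out for the finite-stage problems in \textit{Theorem} \ref{Th boundedness of the sequence of solutions}, the only new structural input being the completeness of $\Vb$, which is precisely why that Hilbert space was isolated beforehand. The single point deserving a word of care is the well-definedness of the infinite sums defining $a$ and $\Phi$ on all of $\Vb$, which is settled by the observation that the trace of any $q\in\Vb$ lies in $\ell^{\,2}(\B)$ by \textit{Remark} \ref{Rem the little ell 2 equivalence}(ii).
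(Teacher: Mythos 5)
Your proposal is correct and follows essentially the same route as the paper: an application of the Lax--Milgram Theorem on the Hilbert space $\Vb$, with continuity of the bilinear form obtained via Cauchy--Schwarz and $\Vb$-ellipticity from the bound $\min\{1,\beta\}\,\Vert q\Vert_{\scriptscriptstyle \Vb}^{2}\leq a(q,q)$. Your additional explicit verification that the right-hand-side functional is bounded on $\Vb$ is a detail the paper leaves implicit, but it is not a different argument.
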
 
\begin{proof}
Consider the bilinear form 
\begin{equation}\label{Def Bilinear Form Dyadic}
a(q, r)\defining \int_{0}^{1}\del q \, \del r 
+ \beta \sum_{\b \, \in  \, \B} q  (\b) \,  r(\b).
\end{equation}
Using the Cauchy-Schwartz inequality in each summand of the right hand side in the expression above, we conclude the continuity of the bilinear form. 
On the other hand, it is direct to see that $\min\{1, \beta\}\Vert q \Vert_{\scriptscriptstyle \Vb}^{2}\leq \vert a(q,q)\vert$, which implies that the bilinear form is $\Vb$-elliptic. Applying the Lax-Milgram Theorem, the result follows, see \cite{Showalter77}. 
\qed 
\end{proof}
Now we are ready to prove the weak convergence of the whole sequence of solutions of \textsc{Problems} \eqref{Pblm Variational n stage Concise} to the solution $p$ of \textsc{Problem} \eqref{Pblm Variational Microstructure Concise}.
\begin{theorem}\label{Th weak convergence of the solutions in V}
Let $\{\pn: n\in \N\}\subseteq V$ be the sequence of solutions of \textsc{Problems} \eqref{Pblm Variational n stage Concise}, then it converges weakly in $V$ to the unique solution $p$ of \textsc{Problem} \eqref{Pblm Variational Microstructure Concise}. 
\end{theorem}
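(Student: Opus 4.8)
The plan is to show that the bounded sequence $\{\pn\}$ has a weak limit which must solve \eqref{Pblm Variational Microstructure Concise}, and then invoke uniqueness (Theorem \ref{Th well-posedness fractal interface problem}) to upgrade subsequential convergence to convergence of the whole sequence. First I would use Theorem \ref{Th boundedness of the sequence of solutions}: since $f \in \ell^2(\B)$, the sequence $\{\pn\}$ is bounded in $V$, so by reflexivity of $V$ there is a subsequence $\{p_{n_k}\}$ with $p_{n_k} \wconv \xi$ weakly in $V$ for some $\xi \in V$. By Theorem \ref{Th weak convergence of the whole sequence}, we already know $\xi \in \Vb$ and that $\sum_{\b \in \Bnk} p_{n_k}(\b) q(\b) \to \sum_{\b \in \B} \xi(\b) q(\b)$ for every $q \in \Vb$; moreover, since $\Vb \subseteq V$, this convergence along the subsequence in fact holds for the whole sequence once we have identified the limit, so it suffices to work with the subsequence for now.

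Next I would pass to the limit in the variational identity \eqref{Pblm Variational n stage Concise} tested against a fixed $q \in \Vb$. The term $\int_0^1 \del p_{n_k}\, \del q$ converges to $\int_0^1 \del \xi\, \del q$ by definition of weak convergence in $V$ (the map $u \mapsto \int_0^1 \del u\, \del q$ is a bounded linear functional on $V$). The interface term $\beta \sum_{\b \in \Bnk} p_{n_k}(\b) q(\b)$ converges to $\beta \sum_{\b \in \B} \xi(\b) q(\b)$ by \eqref{Eq convergence of the fractal functional}. On the right-hand side, $\int_0^1 F\, p_{n_k}$ is fixed in $q$ and does not depend on the test function — wait, more carefully: the right-hand side of \eqref{Pblm Variational n stage Concise} is $\int_0^1 F q + \sum_{\b \in \Bn} f(\b) q(\b)$, which for fixed $q \in \Vb$ converges to $\int_0^1 F q + \sum_{\b \in \B} f(\b) q(\b)$ because $f \in \ell^2(\B)$, $q|_\B \in \ell^2(\B)$, and $\Bnk \uparrow \B$ (dominated convergence for sums, i.e. convergence of partial sums of an absolutely convergent series). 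Hence $\xi$ satisfies \eqref{Pblm Variational Microstructure Concise} for all $q \in \Vb$, so $\xi = p$, the unique solution.

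Finally, the standard subsequence argument closes the proof: every weakly convergent subsequence of the bounded sequence $\{\pn\} \subseteq V$ converges to the same limit $p$, so the whole sequence converges weakly in $V$ to $p$. The main obstacle — and the reason Theorem \ref{Th weak convergence of the whole sequence} was proved first — is the interface term: weak convergence $\pn \wconv \xi$ in $V$ gives only pointwise convergence $\pn(\b) \to \xi(\b)$ at each $\b$, which by itself does not justify interchanging the limit with the infinite sum $\sum_{\b \in \B}$, nor does it immediately place $\xi$ in $\Vb$; this is exactly what the $\ell^2(\B)$-weak convergence of the functionals $\Lambda_n$ (established via the a-priori bound and Riesz representation) supplies. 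With that lemma in hand, the passage to the limit here is routine.
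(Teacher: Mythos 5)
Your proposal is correct and follows essentially the same route as the paper's own proof: extract a weakly convergent subsequence from the a-priori bound, pass to the limit in the variational identity using Theorem \ref{Th weak convergence of the whole sequence} to handle the interface term and to place the limit in $\Vb$, identify the limit as $p$ by uniqueness, and conclude for the whole sequence by the standard subsequence argument. The extra detail you supply on the convergence of the right-hand side and on why the interface term is the delicate point is consistent with, and slightly more explicit than, the paper's treatment.
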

\begin{proof} 
Since $\{\pn: n\in \N\}$ is bounded in $V$ there must exist a weakly convergent subsequence $\{\pnk: k\in \N \}$ and a limit $\xi\in V$. Test \eqref{Pblm Variational n stage Concise} with $q\in \Vb\subseteq V$ arbitrary, this gives
\begin{equation*} 
\int_{0}^{1}\del \pnk  \del q  + 
 \beta \sum_{\b \, \in  \, \Bnk } \pnk  (\b) \,  q(\b)= \int_{0}^{1}F\,q 
+ \sum_{\b \, \in \, \Bnk} f(\b) \,  q(\b) \, .
\end{equation*}
Letting $k\rightarrow \infty$ in the expression above and, in view of \textsc{Theorem} \ref{Th weak convergence of the whole sequence}, we get
\begin{align*} 
& \int_{0}^{1}\del \xi \, \del q  + 
 \beta \sum_{\b \, \in  \, \B } \xi  (\b) \,  q(\b) = \int_{0}^{1}F\,q 
+ \sum_{\b \, \in \, \B} f(\b) \,  q(\b) \,, &
& \forall \, q\in \Vb .
\end{align*}
Additionally, \textsc{Theorem} \ref{Th weak convergence of the whole sequence} implies that $\xi\in \Vb$. Therefore $\xi$ is a solution to \textsc{Problem} \eqref{Pblm Variational Microstructure Concise}, which is unique due to \textsc{Theorem} \ref{Th well-posedness fractal interface problem}; therefore we conclude that $\xi \equiv p$. Since the above holds for any weakly convergent subsequence of $\{\pn: n \in \N \}$ it follows that the whole sequence must converge weakly to $p$.  
\qed
\end{proof}
\begin{lemma}\label{Th norms convergence}
Let $\{\pn: n\in \N \}\subseteq V$ be the sequence of solutions of \textsc{Problems} \eqref{Pblm Variational n stage Concise} then
\begin{subequations}\label{Stmt strong convergence}
\begin{equation} \label{Stmt strong convergence in V}
\Vert \pn-  p \Vert_{\scriptscriptstyle V} \xrightarrow[ n \, \rightarrow \, \infty ]{} 0 .
\end{equation}
\begin{equation} \label{Stmt strong convergence of interface fractal norm}
\left\Vert \gamma_{\scriptscriptstyle \B} (\pn) \ind_{\Bn} -  \gamma_{\scriptscriptstyle \B}(p)\,\ind_{\B}\right\Vert_{\scriptscriptstyle \ell^{\, 2}(\B)} \xrightarrow[n \, \rightarrow \, \infty]{} 0 .
\end{equation}
Where $\gamma_{\scriptscriptstyle \B}(q) \defining q\,\ind_{\scriptscriptstyle \B }$ is the fractal trace operator on $V$. 
\end{subequations}
\end{lemma}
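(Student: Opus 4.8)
The plan is to upgrade the weak convergence from Theorem \ref{Th weak convergence of the solutions in V} to strong convergence by the standard ``weak convergence plus convergence of norms implies strong convergence'' argument, carried out in the Hilbert space $\Vb$ (or rather on the two pieces, the $V$-part and the $\ell^{2}(\B)$-part, simultaneously). First I would test \textit{Problem} \eqref{Pblm Variational n stage Concise} with $q = \pn$ to obtain the energy identity
\begin{equation*}
\Vert \pn \Vert_{\scriptscriptstyle V}^{2} + \beta \sum_{\b \, \in \, \Bn} \pn^{2}(\b) = \int_{0}^{1} F\, \pn + \sum_{\b \, \in \, \Bn} f(\b)\, \pn(\b),
\end{equation*}
and test \textit{Problem} \eqref{Pblm Variational Microstructure Concise} with $q = p$ to obtain the analogous identity for the limit. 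The right-hand side of the $n$-stage identity converges to the right-hand side of the limit identity: the term $\int_{0}^{1} F\,\pn \to \int_{0}^{1} F\, p$ because $\pn \rightharpoonup p$ weakly in $V$ hence weakly in $L^{2}(0,1)$ (in fact $\pn \to p$ strongly in $L^{2}(0,1)$ by compact embedding, which is more than enough), and the interface term $\sum_{\b \in \Bn} f(\b)\,\pn(\b) \to \sum_{\b \in \B} f(\b)\, \xi(\b)$ by \textit{Theorem} \ref{Th weak convergence of the whole sequence} applied with $q = f \in \ell^{2}(\B)$, using $\xi = p$. Therefore the full energy quantity converges:
\begin{equation*}
\Vert \pn \Vert_{\scriptscriptstyle V}^{2} + \beta \sum_{\b \, \in \, \Bn} \pn^{2}(\b) \xrightarrow[n \to \infty]{} \Vert p \Vert_{\scriptscriptstyle V}^{2} + \beta \sum_{\b \, \in \, \B} p^{2}(\b).
\end{equation*}

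Next I would combine this with the weak lower semicontinuity of each of the two seminorms. On one hand $\pn \rightharpoonup p$ in $V$ gives $\Vert p \Vert_{\scriptscriptstyle V} \leq \liminf \Vert \pn \Vert_{\scriptscriptstyle V}$. On the other hand, $\Lambda_{n} \rightharpoonup \gamma_{\scriptscriptstyle \B}(p)$ weakly in $(\ell^{2}(\B))'$ (Theorem \ref{Th weak convergence of the whole sequence}), and under the Riesz identification $\Vert \Lambda_{n} \Vert = \big(\sum_{\b \in \Bn} \pn^{2}(\b)\big)^{1/2}$, so weak lower semicontinuity of the norm in the Hilbert space $(\ell^{2}(\B))'$ gives $\big(\sum_{\b \in \B} p^{2}(\b)\big)^{1/2} \leq \liminf \big(\sum_{\b \in \Bn} \pn^{2}(\b)\big)^{1/2}$. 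Since $\beta > 0$, if either of these two $\liminf$ inequalities were strict, adding $\beta$ times the second to the first would contradict the convergence of the sum displayed above (a sum of two convergent-from-above-in-liminf quantities whose total converges forces each to converge). Hence $\Vert \pn \Vert_{\scriptscriptstyle V} \to \Vert p \Vert_{\scriptscriptstyle V}$ and $\sum_{\b \in \Bn} \pn^{2}(\b) \to \sum_{\b \in \B} p^{2}(\b)$ separately.

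Finally I would conclude strong convergence. For \eqref{Stmt strong convergence in V}: in the Hilbert space $V$, $\pn \rightharpoonup p$ together with $\Vert \pn \Vert_{\scriptscriptstyle V} \to \Vert p \Vert_{\scriptscriptstyle V}$ yields $\Vert \pn - p \Vert_{\scriptscriptstyle V}^{2} = \Vert \pn \Vert_{\scriptscriptstyle V}^{2} - 2\langle \pn, p\rangle_{\scriptscriptstyle V} + \Vert p \Vert_{\scriptscriptstyle V}^{2} \to 0$. For \eqref{Stmt strong convergence of interface fractal norm}: in the Hilbert space $\ell^{2}(\B)$, identifying $\Lambda_{n}$ with $\gamma_{\scriptscriptstyle \B}(\pn)\ind_{\Bn}$, we have weak convergence to $\gamma_{\scriptscriptstyle \B}(p)\ind_{\B}$ plus convergence of norms, so again the expansion of $\Vert \gamma_{\scriptscriptstyle \B}(\pn)\ind_{\Bn} - \gamma_{\scriptscriptstyle \B}(p)\ind_{\B}\Vert_{\scriptscriptstyle \ell^{2}(\B)}^{2}$ into the three inner-product terms tends to zero. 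I expect the main (modest) obstacle to be the bookkeeping in the second paragraph: one must be a little careful that the convergence of the \emph{sum} of the two nonnegative pieces, each known only to be weakly lower semicontinuous, genuinely forces termwise convergence --- this is exactly where $\beta > 0$ is used, and it should be stated as a short separate observation (if $a_n \to a$, $\liminf b_n \geq b$, $\liminf c_n \geq c$, and $b + c = a$ with $b_n + c_n = a_n$, then $b_n \to b$ and $c_n \to c$) rather than glossed over. Everything else is the textbook Hilbert-space argument.
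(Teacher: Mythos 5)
Your proposal is correct and follows essentially the same route as the paper's proof: energy identity on the diagonal, passage to the limit in the right-hand side via the weak convergences of Theorems \ref{Th weak convergence of the whole sequence} and \ref{Th weak convergence of the solutions in V}, weak lower semicontinuity of the two seminorms, the real-sequence observation forcing termwise convergence of the norms, and finally the standard ``weak convergence plus norm convergence implies strong convergence'' step in each Hilbert space. If anything, you make explicit the elementary sequence lemma that the paper dismisses as ``a simple exercise of real sequences,'' which is a welcome clarification rather than a deviation.
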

\begin{proof}
We know that $\pn \wconv p$ weakly in $V$ and $\pn \ind_{\Bn}\wconv p\ind_{\B}$ weakly in $\ell^{\,2}(\B)$ from \textsc{Theorems} \ref{Th weak convergence of the solutions in V} and \ref{Th weak convergence of the whole sequence} respectively, therefore
\begin{align*}
\int_{0}^{1}\left\vert \del p\right\vert^{2}  \leq 
\liminf_{n}\int_{0}^{1} \left\vert \partial \pn\right\vert^{2} ,\\
\sum_{\b\,\in\, \B}p^{2}(\b) \leq  \liminf_{n} \sum_{\b\,\in\, \B}\pn^{2}(\b)\,\ind_{\Bn}(\b) = 
\liminf_{n} \sum_{\b\,\in\, \Bn}\pn^{2}(\b) . 
\end{align*}
On the other hand, testing \eqref{Pblm Variational Microstructure Concise} on the diagonal $\pn$, we get
\begin{equation*}
\int_{0}^{1}\del \pn^{2} 
+ \beta \sum_{\b \, \in  \, \B } \pn^{2}  (\b)\,\ind_{\Bn}(\b) = \int_{0}^{1}F\,\pn 
+ \sum_{\b \, \in \, \B} f(\b) \,  \pn(\b) \, \ind_{\Bn}(\b) .
\end{equation*}
Letting $n\rightarrow\infty$ in the expression above we get
\begin{equation}\label{Ineq Solution Sandwich}
\begin{split}
\lim_{n}\Big\{\int_{0}^{1}\del \pn^{2} 
+ \beta \sum_{\b \, \in  \, \Bn } \pn^{2}  (\b)\Big\}
& = \int_{0}^{1}F\,p 
+ \sum_{\b \, \in \, \B} f(\b) \,  p(\b)  \\
& = \int_{0}^{1}\del p^{2} 
+ \beta \sum_{\b \, \in  \, \B } p^{2}  (\b) \\
& \leq \liminf_{n}\int_{0}^{1} \left\vert \partial \pn\right\vert^{2}
+  \beta\, \liminf_{n}  \sum_{\b\,\in\, \Bn}\pn^{2}(\b) .
\end{split}
\end{equation}
Hence,
\begin{equation*}
\lim_{n}\Big\{\int_{0}^{1}\del \pn^{2} 
+ \beta \sum_{\b \, \in  \, \Bn } \pn^{2}  (\b) \Big\}
= \liminf_{n}\int_{0}^{1} \left\vert \partial \pn\right\vert^{2}
+  \beta\, \liminf_{n}  \sum_{\b\,\in\, \Bn}\pn^{2}(\b) .
\end{equation*}
From here, a simple exercise of real sequences shows that both sequences $\{\int_{0}^{1} \left\vert \partial \pn\right\vert^{2}: n\in \N \}$ and $\big\{\sum\limits_{\b\,\in\, \Bn}\pn^{2}(\b): n\in \N\big\}$ converge.
Combining these facts with \textsc{Inequality} \eqref{Ineq Solution Sandwich} we have
\begin{equation*} 
\int_{0}^{1}\del p^{2} 
+ \beta \sum_{\b \, \in  \, \B } p^{2}  (\b) \\
= \lim_{n}\int_{0}^{1}\del \pn^{2} 
+ \beta \, \lim_{n} \sum_{\b \, \in  \, \Bn } \pn^{2}  (\b).
\end{equation*}
Therefore, if $\displaystyle \int_{0}^{1}\del p^{2} \lneqq \liminf\limits_{n}\int_{0}^{1} \left\vert \partial \pn\right\vert^{2}$ or $\sum\limits_{\b \, \in  \, \B } p^{2}  (\b)  \lneqq \liminf\limits_{n} \sum\limits_{\b \, \in  \, \Bn } \pn^{2}  (\b)$, the equality above would not be possible. Then, it holds that
\begin{equation*}
\Vert p \Vert_{\scriptscriptstyle V}^{2}  
= \lim_{n}\int_{0}^{1}  \partial \pn^{2} 
= \lim_{n} \Vert \pn \Vert_{\scriptscriptstyle V}^{2} ,
\end{equation*}
and
\begin{equation*}
\Vert \gamma_{\scriptscriptstyle \B}(p) \Vert_{\scriptscriptstyle \ell^{2}(\B)}^{2}  
%
= \lim_{n} \sum_{\b\,\in\, \B}\pn^{2}(\b)  \, \ind_{\Bn}(\b) 
= 
\lim_{n} \Vert \gamma_{\scriptscriptstyle \B}(\pn\ind_{\Bn}) \Vert_{\scriptscriptstyle \ell^{2}(\B)}^{2}.
\end{equation*}
Finally, the convergence of norms together with the weak convergence on the underlying spaces, imply the strong convergence \eqref{Stmt strong convergence} of both sequences.
\qed
\end{proof}
%
%
%
%
%
\subsection{The Space $\Vb$}
We start this section proving a lemma which is central in the understanding of the space $\Vb$. 
\begin{lemma}\label{Th microstructure accumulation points}
Let $\B$ be a microstructure in $[0,1]$ and let $q\in H^{1}(0,1)$ be such that $q\vert_{\B}\in \ell^{t}(\B)$ with $1\leq t < \infty$. Then, if $\b_{0}\in \B$ is an accumulation point of $\B$, it must hold that $q(\b_{0}) = 0$.
\end{lemma}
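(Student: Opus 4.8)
The plan is to argue by contradiction: suppose $q(\b_0) \neq 0$ for some accumulation point $\b_0 \in \B$. The key idea is that $q \in H^1(0,1)$ embeds into $C^{0,1/2}([0,1])$ (Hölder continuity, in fact $|q(x)-q(y)| \le \|\del q\|_{L^2}\,|x-y|^{1/2}$ by Cauchy--Schwarz applied to $q(x)-q(y)=\int_y^x \del q$), so $q$ is continuous at $\b_0$. Hence there is a $\delta > 0$ and a constant $c = |q(\b_0)|/2 > 0$ such that $|q(x)| \ge c$ for every $x \in (\b_0-\delta, \b_0+\delta)\cap[0,1]$. Since $\b_0$ is an accumulation point of $\B$, the set $\B \cap (\b_0-\delta,\b_0+\delta)$ is infinite, so it contains infinitely many points $\b$ with $|q(\b)|^p \ge c^p > 0$. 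This forces $\sum_{\b \in \B} |q(\b)|^p = +\infty$, contradicting $q|_\B \in \ell^p(\B)$.

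First I would record the Sobolev/Morrey embedding estimate in dimension one explicitly, since it is elementary here and gives continuity of $q$ at every point of $[0,1]$ (including $\b_0$). Then I would extract the neighborhood on which $|q| \ge |q(\b_0)|/2$. Next I would invoke the definition of accumulation point to guarantee infinitely many microstructure elements in that neighborhood. Finally I would sum up: an infinite subset of $\B$ on which $|q(\b)|^p$ is bounded below by a fixed positive constant makes the $\ell^p$ norm diverge, which is the desired contradiction.

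I do not expect a genuine obstacle here; the only point requiring a little care is making sure the neighborhood is taken relative to $[0,1]$ (so the argument is unaffected if $\b_0$ is an endpoint) and that "accumulation point of $\B$" is used in the sense that every neighborhood of $\b_0$ meets $\B$ in infinitely many points — equivalently, in a point other than $\b_0$, which by iteration yields infinitely many. The continuity of $H^1(0,1)$ functions is the one external ingredient, and it is entirely standard in one dimension.
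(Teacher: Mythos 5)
Your proposal is correct and follows essentially the same argument as the paper: continuity of $q$ (the paper invokes absolute continuity of $H^{1}(0,1)$ functions where you use the Morrey/H\"older embedding, but both just deliver continuity at $\b_{0}$) gives a neighborhood on which $\vert q\vert \geq \vert q(\b_{0})\vert/2$, and the accumulation-point hypothesis places infinitely many elements of $\B$ there, forcing the $\ell^{p}$ sum to diverge.
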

\begin{proof}
Let $q$ satisfy the hypotheses and let $\b_{0}\in \B$ be an accumulation point of $\B$ such that $\vert q(\b_{0}) \vert >0$. Since $q\in H^{1}(0,1)$, it is absolutely continuous and there exists $\epsilon >0$ such that $\vert q(x)\vert > \dfrac{1}{2}\, \vert q(\b_{0}) \vert$ for all $\vert x - \b_{0}\vert < \epsilon$; hence
\begin{equation*}
\sum_{\b \, \in \, \B} \vert q(\b) \vert^{t} \geq 
\sum_{\substack{\b\, \in \,\B\\[3pt] \vert \b - \b_{0}\vert < \epsilon }} \vert q(\b) \vert^{t}
\geq 
 \card \big(\{ \b\, \in \,\B : \ \vert \b - \b_{0}\vert < \epsilon\}\big) \, \frac{1}{2^{p}}\,\vert q(\b_{0}) \vert^{t}.
\end{equation*}
The set $\{ \b\, \in \,\B : \ \vert \b - \b_{0}\vert < \epsilon\}$ contains infinitely many points because $\b_{0}$ is an accumulation point of $\B$, therefore $q\vert_{\B}$ does not belong to $\ell^{t}(\B)$ which is absurd.
\qed
\end{proof}
\begin{corollary}\label{Th dense microstructure}
Suppose that $\B$ is dense in $[0,1]$ then $\Vb = \{0\}$.
\end{corollary}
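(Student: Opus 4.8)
The plan is to combine Lemma~\ref{Th microstructure accumulation points} with the topological structure of $[0,1]$. First I would take an arbitrary $q\in\Vb$; by definition $q\in H^{1}(0,1)$ with $q(0)=0$ and $q\vert_{\B}\in\ell^{\,2}(\B)$, so the hypotheses of Lemma~\ref{Th microstructure accumulation points} are met with $p=2$. The key observation is that since $\B$ is dense in $[0,1]$, \emph{every} point of $\B$ is an accumulation point of $\B$ (a dense subset of $[0,1]$ has no isolated points, because any neighbourhood of any point of $[0,1]$ meets $\B$ in infinitely many points). Hence Lemma~\ref{Th microstructure accumulation points} forces $q(\b)=0$ for all $\b\in\B$.

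Next I would upgrade ``$q$ vanishes on $\B$'' to ``$q\equiv 0$ on $[0,1]$''. This is where continuity of $q$ enters: $q\in H^{1}(0,1)$ embeds into $C[0,1]$ (in one dimension $H^1$ functions are absolutely continuous, a fact already used in the proof of Lemma~\ref{Th microstructure accumulation points}), and a continuous function on $[0,1]$ that vanishes on a dense set vanishes identically. Therefore $q=0$ in $V$, and since $q\in\Vb$ was arbitrary, $\Vb=\{0\}$.

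I do not anticipate a real obstacle here; the only point requiring a word of care is the claim that density of $\B$ makes every point of $\B$ an accumulation point, which I would state explicitly rather than leave implicit, since Lemma~\ref{Th microstructure accumulation points} is phrased for a single accumulation point $\b_0\in\B$ and one must observe it applies to all of $\B$ simultaneously. One could alternatively bypass Lemma~\ref{Th microstructure accumulation points} and argue directly: if $q(x_0)\neq 0$ for some $x_0\in[0,1]$, absolute continuity gives an interval around $x_0$ on which $|q|>\tfrac12|q(x_0)|$, that interval contains infinitely many points of the dense set $\B$, and then $\sum_{\b\in\B}|q(\b)|^2=\infty$, contradicting $q\in\Vb$. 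Either route is short; I would use the former to reuse the lemma and keep the exposition economical.
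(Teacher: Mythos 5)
Your argument is correct and is essentially identical to the paper's own proof: apply Lemma~\ref{Th microstructure accumulation points} after noting that density of $\B$ makes every point of $\B$ an accumulation point, then use absolute continuity of $H^{1}(0,1)$ functions and density of $\B$ to conclude $q\equiv 0$. No substantive differences.
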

\begin{proof}
Due to \textsc{Lemma} \ref{Th microstructure accumulation points} if $q\in \Vb$ it must hold that $q(\b) = 0$ for every $\b$ accumulation point of $\B$. Since $\B$ is dense in $(0,1)$ every point of $\B$ is an accumulation point of $\B$, therefore $q(\b) = 0$ for all $\b\in \B$. On the other hand, $q$ is absolutely continuous because it belongs to $H^{1}(0,1)$ and due to the density of $\B$ in $[0,1]$ it follows that $q = 0$.
\qed
\end{proof}
The lemma \ref{Th microstructure accumulation points} states that the accumulation points of the microstructure contained in $\B$ define an important property of $\Vb$. In addition, the next property follows trivially
\begin{corollary}\label{Th dense microstructure trivializes}
Let $\B\subseteq [0,1]$ be a dense microstructure then, \textsc{Problem} \eqref{Pblm Variational Microstructure Concise} becomes trivial and the sequence $\{\pn: n\in \N \}$ satisfies
\begin{align}\label{Stmt convergence to zero dense microstructure}
& \Vert \pn\Vert_{\scriptscriptstyle V} \rightarrow 0 \; , & 
& \sum_{\b \, \in \, \Bn}\pn^{2}(\b) \rightarrow 0 \; .
\end{align}
\end{corollary}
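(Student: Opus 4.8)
The plan is to derive everything from the well-posedness result (\textit{Theorem} \ref{Th well-posedness fractal interface problem}) together with \textit{Corollary} \ref{Th dense microstructure} and the strong-convergence \textit{Lemma} \ref{Th norms convergence}. First I would observe that, since $\B$ is dense in $[0,1]$, \textit{Corollary} \ref{Th dense microstructure} gives $\Vb = \{0\}$. Hence the only candidate for a solution of \textit{Problem} \eqref{Pblm Variational Microstructure Concise} is $p = 0$; and indeed $p = 0$ trivially satisfies the variational identity because both sides vanish when $q$ ranges over $\Vb = \{0\}$ (the sum over $\b \in \B$ is empty of constraints, or rather the only test function is $q \equiv 0$). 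By the uniqueness part of \textit{Theorem} \ref{Th well-posedness fractal interface problem}, $p = 0$ is \emph{the} solution, so \textit{Problem} \eqref{Pblm Variational Microstructure Concise} is trivial in the sense that its solution is identically zero regardless of the data $F \in L^2(0,1)$ and $f \in \ell^2(\B)$.

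Next I would invoke \textit{Lemma} \ref{Th norms convergence}: the sequence $\{\pn : n \in \N\}$ of solutions of \textit{Problems} \eqref{Pblm Variational n stage Concise} converges to $p$ both in the $V$-norm and in the fractal-trace sense, i.e. \eqref{Stmt strong convergence in V} and \eqref{Stmt strong convergence of interface fractal norm} hold. Since we have just identified $p = 0$, these become $\Vert \pn \Vert_{\scriptscriptstyle V} \to 0$ and $\Vert \gamma_{\scriptscriptstyle \B}(\pn)\ind_{\Bn}\Vert_{\scriptscriptstyle \ell^2(\B)} \to 0$. Unwinding the definition of the fractal trace norm, $\Vert \gamma_{\scriptscriptstyle \B}(\pn)\ind_{\Bn}\Vert_{\scriptscriptstyle \ell^2(\B)}^2 = \sum_{\b \in \Bn}\pn^2(\b)$, which yields the second convergence in \eqref{Stmt convergence to zero dense microstructure}; the first is immediate from $\Vert \pn \Vert_{\scriptscriptstyle V}\to 0$. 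This closes the proof.

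There is essentially no obstacle here: the corollary is a direct packaging of three previously established results. The only point requiring a moment's care is the logical step that $\Vb = \{0\}$ forces $p = 0$ to be the solution — one must note that the variational formulation \eqref{Pblm Variational Microstructure Concise} is still a well-posed statement (Lax--Milgram on the zero space is vacuously fine, the bilinear form being trivially elliptic and bounded on $\{0\}$), so \textit{Theorem} \ref{Th well-posedness fractal interface problem} applies and delivers $p = 0$ as the unique solution. Everything else is substitution of $p = 0$ into \textit{Lemma} \ref{Th norms convergence}. I would therefore keep the proof to three or four sentences, citing \textit{Corollary} \ref{Th dense microstructure} for $\Vb = \{0\}$, \textit{Theorem} \ref{Th well-posedness fractal interface problem} for uniqueness of $p = 0$, and \textit{Lemma} \ref{Th norms convergence} for the strong convergence \eqref{Stmt convergence to zero dense microstructure}.
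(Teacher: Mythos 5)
Your proof is correct and follows exactly the route the paper intends: the paper presents this corollary as following ``trivially'' from \textit{Corollary} \ref{Th dense microstructure} (giving $\Vb=\{0\}$, hence $p=0$) combined with the strong convergence of \textit{Lemma} \ref{Th norms convergence}, which is precisely your argument.
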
 
%
%
The facts presented in \textsc{Lemma} \ref{Th microstructure accumulation points} and \textsc{Corollary} \ref{Th dense microstructure trivializes} are unfortunate, since several important fractal microstructures are dense in $[0,1]$. On the other hand, if a microstructure $\B$ is not dense in $[0,1]$ but it is a perfect set (which is also an important case) the problem \eqref{Pblm Variational Microstructure Concise}, without becoming trivial, becomes fully decoupled and consequently uninteresting. An important example of the first case are the Dyadic numbers in $[0,1]$ and an example of the second case is the collection of extremes from the removed intervals in the construction of the Cantor set. 

As an alternative, it is possible to strengthen the conditions on the interface forcing term $f$, seeking to weaken the summability properties of the limit function fractal trace $\gamma_{\scriptscriptstyle \B}(p) = p\ind_{\B}$. However, this approach yields estimates equivalent to \textsc{Inequality} \eqref{Ineq a-priori estimate pressures} in \textsc{Theorem} \ref{Th boundedness of the sequence of solutions} and consequently $p\in \Vb$. Therefore, this is not a suitable choice either.

\section{The Fractal Scaling Model}\label{Dec Fractal Scaling Modeling}
%
%
%
%
The unsatisfactory conclusions shown in the previous section are, essentially, due to a physical fact assumed in the model: that the storage fluid exchange coefficient $\beta$ is constant all over the microstructure $\B$. Consequently, the storage effect across the microstructure adds up to infinity. Hence, the modeling of $\beta$ has to avoid this hypothesis. On one hand, we need to assure that the form $\Lambda: V\times V\rightarrow \R$, defined by
\begin{equation}\label{Def Bilinear Form Scaled Coefficient}
\Lambda(q, r)\defining 
\sum_{\b \, \in  \, \B} \beta(\b) \, q  (\b) \,  r(\b),
\end{equation}
%
%
is bilinear and continuous. On the other hand, \textsc{Lemma} \ref{Th microstructure accumulation points} states the need to avoid global estimates for $\{\pn(\b)\ind_{\Bn}(\b): \b\in \B \}\subseteq \ell^{t}(\B)$ for any $t > 1$. Therefore, if $\{\beta(\b): \b\in \B\} \in \ell^{1}(\B)$ and recalling that $\beta(x) \geq 0$ for all $x\in (0,1)$, the bilinear form satisfies
\begin{equation} \label{Def Bilinear Form Scaled Coefficient Continuity}
\begin{split}
\bigg\vert  \sum_{\b \, \in  \, \B} \beta(\b) \,  q  (\b)  \,  r(\b) \bigg\vert
&  \leq \sup_{\b\,\in\,\B} \vert  q  (\b) \vert  \,  \sup_{\b\,\in\,\B} \vert r(\b) \vert  \sum_{\b \, \in  \, \B}  \beta(\b)  \\
& \leq \Vert q \Vert_{V} \, \Vert r \Vert_{V} \sum_{\b \, \in  \, \B}  \beta(\b) .
 \end{split}
\end{equation}
In order to attain this condition, it is natural to assume that the storage coefficient $\beta$, scales consistently with the properties of the fractal microstructure. This motivates the following definition
\begin{definition}\label{Def scaled storage coefficient}
Let $\B\subseteq [0,1]$ be a fractal microstructure with $L > 1$ as given in \textsc{Definition} \ref{Def Fractal Microstructure}. Then, a storage coefficient $\beta: \B\rightarrow (0, \infty)$, is said to \textbf{scale consistently} with a given $\sigma$-finite development $\{\Bn: n\geq 0 \}$ if it satisfies
\begin{equation}\label{Eq scaled storage coefficient}
\beta (\b) \defining  a \sum_{n \, \in \, \N} \Big(\frac{1}{L} - \epsilon \Big)^{n}   
\, \ind_{\Bn - \B_{n-1}}(\b)\, ,
\quad \text{with}\; a > 0 \; \text{and}\;\,  0 < \epsilon<  \frac{1}{L} \, .
\end{equation}
\end{definition}
\begin{proposition}\label{Th scaled storage coefficient convergence}
Let $\beta: \B\rightarrow (0, \infty)$ be a storage coefficient consistently scaled with $\B$ then $\beta\in \ell^{1}(\B)$.
\end{proposition}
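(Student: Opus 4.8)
The plan is to compute the $\ell^1$ norm of $\beta$ directly by summing over the ``shells'' $\Bn - \B_{n-1}$ of the $\sigma$-finite development, exploiting the cardinality bounds already recorded in \textit{Remark} \ref{Rem Properties of the sigma finite development}. First I would note that since the sets $\B_n$ increase to $\B$, the shells $\{\Bn - \B_{n-1} : n \in \N\}$ (together with $\B_0$) partition $\B$, so that for each $\b \in \B$ exactly one indicator $\ind_{\Bn - \B_{n-1}}(\b)$ in the defining sum \eqref{Eq scaled storage coefficient} is nonzero; hence $\beta$ is well-defined and nonnegative, and
\begin{equation*}
\sum_{\b \, \in \, \B} \beta(\b)
= a \sum_{n \, \in \, \N} \Big(\frac{1}{L} - \epsilon\Big)^{n} \, \card\big(\Bn - \B_{n-1}\big)\, .
\end{equation*}

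Next I would bound $\card(\Bn - \B_{n-1}) \leq L^{\,n-1}(L-1)\,\card(\B_0)$ using the third inequality in \eqref{Eq Cardinality Relationships}, which gives
\begin{equation*}
\sum_{\b \, \in \, \B} \beta(\b)
\leq a\,(L-1)\,\card(\B_0) \sum_{n \, \in \, \N} L^{\,n-1}\Big(\frac{1}{L} - \epsilon\Big)^{n}
= \frac{a\,(L-1)\,\card(\B_0)}{L} \sum_{n \, \in \, \N} \big(1 - L\epsilon\big)^{n}\, .
\end{equation*}
Since the hypothesis $0 < \epsilon < 1/L$ forces $0 < 1 - L\epsilon < 1$, the geometric series converges, and I would conclude
\begin{equation*}
\sum_{\b \, \in \, \B} \beta(\b)
\leq \frac{a\,(L-1)\,\card(\B_0)}{L} \cdot \frac{1 - L\epsilon}{L\epsilon} < +\infty\, ,
\end{equation*}
so $\beta \in \ell^1(\B)$. (If one prefers to start the geometric sum at $n=1$ the constant changes harmlessly; either way finiteness is what matters.)

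There is essentially no hard part here: the proof is a one-line geometric-series estimate once the cardinality bound is invoked. The only point requiring a moment's care is the bookkeeping at $n=0$ — whether $\B_0$ contributes to the sum in \eqref{Eq scaled storage coefficient} — but since $\B_0$ is finite by hypothesis, any finite contribution from $\b \in \B_0$ is irrelevant to summability, so this does not affect the conclusion. I would therefore keep the proof short, essentially displaying the three lines above and remarking that convergence of the geometric series is exactly guaranteed by the constraint $\epsilon < 1/L$ imposed in \textit{Definition} \ref{Def scaled storage coefficient}.
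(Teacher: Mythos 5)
Your proposal is correct and follows essentially the same route as the paper: decompose $\B$ into the shells $\Bn - \B_{n-1}$, bound their cardinality via \eqref{Eq Cardinality Relationships}, and sum the resulting geometric series $\sum_n (1-L\epsilon)^n$, which converges precisely because $0<\epsilon<1/L$. The only (harmless) difference is that you correctly keep the cardinality relation as an inequality and explicitly handle the $\B_0$ bookkeeping, whereas the paper writes the corresponding step as an equality over partial sums $\sum_{\b\in\Bn}$.
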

\begin{proof}
Since $\{\Bn:n \geq 0\}$ is the $\sigma$-finite development of $\B$ with $\B_{0}\neq\emptyset$, the cardinality \textsc{Identity} \eqref{Eq Cardinality Relationships} implies
%
\begin{align*} 
\sum_{\b\,\in\,\Bn}\vert \beta(\b) \vert = 
\sum_{\b\,\in\,\Bn} \beta(\b) 
& = 
\sum_{k \, = \, 1}^{n} \; \sum_{\b \, \in \, \Bk - \Bkl} \beta(\b) \\
& =
\sum_{k \, = \, 1}^{n} a \, L^{k-1}(L-1) \, \card(\B_{0}) \Big(\frac{1}{L} - \epsilon \Big)^{k}
= a   \, \card(\B_{0})\big(1 - \frac{1}{L}\big) \sum_{k \, = \, 1}^{n} \big(1 - \epsilon \, L\big)^{k}  .
\end{align*}
Since $\epsilon\in \big(0, \dfrac{1}{L} \big)$, as stated in \textsc{Definition} \ref{Def scaled storage coefficient}, the expression above is convergent and the result follows. 
\qed
\end{proof}
\begin{remark}
It is immediate to see some variations of \textit{Definition} \ref{Def scaled storage coefficient} based on the geometric series properties. For instance, if $\{\alpha_{n}: n\in \N\}\subseteq (0,\infty)$ is a sequence such that $\displaystyle \limsup\limits_{ n } \sqrt[n]{\alpha_{n}} < \frac{1}{L}$, then take
\begin{equation*} 
\beta (\b) \defining   \sum_{n \, \in \, \N} 
\alpha_{n}\, \ind_{\Bn - \B_{n-1}}(\b) .
\end{equation*}
The storage coefficient defined above will also satisfy that $\beta\in \ell^{1}(\B)$ and permit the desired \textsc{Estimate} \eqref{Def Bilinear Form Scaled Coefficient Continuity}. A more sophisticated variation considers probabilistic uncertainty for the values $\{\alpha_{n}: n \in \N\}$, whether on its decay rate or, on a distribution centered at the self-similarity parameter $L$. A very basic probabilistic version of the latter will be numerically illustrated in \textsc{Section} \ref{Sec Numerical Experiments}.
\end{remark}
%
%
%
\subsection{The Limit Problem}
%
%
In the following it will be assumed that $F\in L^{2}(0,1)$. 
For notational simplicity it is understood that both, the storage coefficient and the interface forcing term are defined on the whole domain $[0,1]$, with $\beta\big\vert_{(0,1) - \B} = f\big\vert_{(0,1) - \B} = 0$.  It is also assumed that the storage coefficient $\beta\big\vert_{\B}$ scales consistently with the fractal microstructure $\B$ and that the forcing term is summable i.e., $f\in \ell^{1}(\B)$.
\begin{theorem}\label{Th well posedness good coefficient problems}
The following problems are well-posed.  
\begin{align}\label{Pblm Variational n stage Good Coefficients}
& \pn \in V : &
& \int_{0}^{1}\del \pn \, \del q 
+  \sum_{\b \, \in  \, \Bn} \beta(\b)\, \pn  (\b) \,  q(\b)= \int_{0}^{1}F\,q 
+ \sum_{\b \, \in \, \Bn} f(\b) \,  q(\b)\,, &
& \forall q\in V.
\end{align}
\begin{align}\label{Pblm Variational Microstructure Good Coefficients}
& p \in V : & 
& \int_{0}^{1}\del p \, \del q 
+  \sum_{\b \, \in  \, \B} \beta(\b)\, p (\b) \,  q(\b)= \int_{0}^{1}F\,q 
+ \sum_{\b \, \in \, \B} f(\b) \,  q(\b)\,, &
& \forall q\in V.
\end{align}
\end{theorem}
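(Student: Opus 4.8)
The plan is to verify the hypotheses of the Lax--Milgram Theorem for both problems at once, since the $n$-stage problem \eqref{Pblm Variational n stage Good Coefficients} is just the restriction of \eqref{Pblm Variational Microstructure Good Coefficients} obtained by replacing $\B$ by the finite set $\Bn$, and every estimate below is uniform in $n$ once we know $\beta\in\ell^{1}(\B)$ (which is \textit{Proposition} \ref{Th scaled storage coefficient convergence}) and $f\in\ell^{1}(\B)$ (a standing hypothesis). Write $a(q,r)\defining \int_{0}^{1}\del q\,\del r + \sum_{\b\in\B}\beta(\b)\,q(\b)\,r(\b)$ and $\phi(q)\defining \int_{0}^{1}F\,q + \sum_{\b\in\B} f(\b)\,q(\b)$ on $V\times V$ and $V$ respectively.

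First I would record the elementary embedding estimate $\Vert q\Vert_{L^{\infty}(0,1)}\leq \Vert q\Vert_{\scriptscriptstyle V}$ valid for every $q\in V$: since $q$ is absolutely continuous with $q(0)=0$, for any $x\in[0,1]$ one has $|q(x)| = \big|\int_{0}^{x}\del q\big|\leq \sqrt{x}\,\Vert\del q\Vert_{L^{2}(0,1)}\leq \Vert q\Vert_{\scriptscriptstyle V}$. This makes every point evaluation $q\mapsto q(\b)$ a bounded functional on $V$ of norm at most $1$, which is the mechanism behind all the continuity estimates (and it is the reason the analysis can stay in $V$ rather than passing to $\Vb$).

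Next I would check continuity of $a$: the Dirichlet term is bounded by $\Vert q\Vert_{\scriptscriptstyle V}\Vert r\Vert_{\scriptscriptstyle V}$, and for the storage term, exactly as in \eqref{Def Bilinear Form Scaled Coefficient Continuity}, $\big|\sum_{\b\in\B}\beta(\b)q(\b)r(\b)\big|\leq \Vert q\Vert_{L^{\infty}}\Vert r\Vert_{L^{\infty}}\sum_{\b\in\B}\beta(\b)\leq \Vert\beta\Vert_{\ell^{1}(\B)}\Vert q\Vert_{\scriptscriptstyle V}\Vert r\Vert_{\scriptscriptstyle V}$, the finiteness of $\Vert\beta\Vert_{\ell^{1}(\B)}$ being \textit{Proposition} \ref{Th scaled storage coefficient convergence}. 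Hence $a$ is continuous with constant $1+\Vert\beta\Vert_{\ell^{1}(\B)}$. For continuity of $\phi$, Cauchy--Schwarz and the Poincar\'e inequality give $\big|\int_{0}^{1}F q\big|\leq \K_{(0,1)}\Vert F\Vert_{L^{2}(0,1)}\Vert q\Vert_{\scriptscriptstyle V}$, while $\big|\sum_{\b\in\B}f(\b)q(\b)\big|\leq \Vert q\Vert_{L^{\infty}}\Vert f\Vert_{\ell^{1}(\B)}\leq \Vert f\Vert_{\ell^{1}(\B)}\Vert q\Vert_{\scriptscriptstyle V}$, using $f\in\ell^{1}(\B)$.

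Finally, $V$-ellipticity is immediate, and this is where the scaled model pays off: since $\beta(\b)\geq 0$ for all $\b$, we get $a(q,q)=\int_{0}^{1}(\del q)^{2}+\sum_{\b\in\B}\beta(\b)q(\b)^{2}\geq \Vert q\Vert_{\scriptscriptstyle V}^{2}$, i.e.\ $a$ is coercive on all of $V$ with constant $1$. The Lax--Milgram Theorem (see \cite{Showalter77}) then yields existence, uniqueness and continuous dependence for \eqref{Pblm Variational Microstructure Good Coefficients}; replacing $\B$ by the finite set $\Bn$ throughout (all convergence issues then being vacuous, and $\sum_{\b\in\Bn}\beta(\b)\leq\Vert\beta\Vert_{\ell^{1}(\B)}$) gives the identical conclusion for \eqref{Pblm Variational n stage Good Coefficients}. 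I do not expect a genuine obstacle here; the only point requiring care is that boundedness of $a$ and $\phi$ rests on the $\ell^{1}$-summability of $\beta$ and $f$ together with the uniform bound $\Vert q\Vert_{L^{\infty}}\leq\Vert q\Vert_{\scriptscriptstyle V}$, rather than on any $\ell^{2}$ structure as in \textit{Section} \ref{Sec A sigma finite Interface Microstructure}.
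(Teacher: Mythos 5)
Your proposal is correct and follows essentially the same route as the paper: continuity of the storage term via the pointwise bound $\sup_{\b}\vert q(\b)\vert\leq \Vert q\Vert_{\scriptscriptstyle V}$ together with $\beta\in\ell^{1}(\B)$ (the paper's \textit{Inequality} \eqref{Def Bilinear Form Scaled Coefficient Continuity}), coercivity on $V$ from the non-negativity of $\beta$, and then Lax--Milgram, with the $n$-stage case handled identically. You merely spell out a few steps the paper leaves implicit, such as the continuity of the right-hand-side functional using $f\in\ell^{1}(\B)$.
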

\begin{proof}
Consider the bilinear forms
\begin{subequations} \label{Def Bilinear Forms Scaled Coefficient}
\begin{equation}\label{Def Bilinear Forms Scaled Coefficient n stage}
\langle q, r\rangle_{\scriptscriptstyle \Bn}  \defining 
\int_{0}^{1}\del q \, \del r 
+  \sum_{\b \, \in  \, \Bn} \beta(\b)\, q  (\b) \,  r(\b) \, ,  \quad n\in \N, 
\end{equation}
\begin{equation}\label{Def Bilinear Forms Scaled Coefficient full Microstructure}
\langle q, r\rangle_{\scriptscriptstyle \B}  \defining \int_{0}^{1}\del q \, \del r 
+  \sum_{\b \, \in  \, \B} \beta(\b)\, q  (\b) \,  r(\b) .
\end{equation}
\end{subequations}
Since $\beta\in \ell^{1}(\B)$, \textsc{Inequality} \eqref{Def Bilinear Form Scaled Coefficient Continuity} is satisfied and the bilinear forms $(q, r)\mapsto \sum\limits_{\b \, \in  \, \B} \beta(\b)\, q  (\b) \,  r(\b)$, $(q, r)\mapsto \sum\limits_{\b \, \in  \, \Bn} \beta(\b)\, q  (\b) \,  r(\b)$ for each $n\in \N$  
are continuous; consequently both bilinear forms \eqref{Def Bilinear Forms Scaled Coefficient} are continuous.  On the other hand, since the coefficient $\beta$ is non-negative the bilinear forms \eqref{Def Bilinear Forms Scaled Coefficient} are both coercive on $V$. Applying the Lax-Milgram Theorem the result follows.
\qed
\end{proof}
\begin{theorem}
Let $\{\pn: n\in \N \}$ be the sequence of solutions of \textsc{Problems} \eqref{Pblm Variational n stage Good Coefficients} and let $p$ be the solution to \textsc{Problem} \eqref{Pblm Variational Microstructure Good Coefficients}. Then 
\begin{enumerate}[(i)]
\item $\{\pn: n\in \N \}$  converges weakly to $p$ in $V$.

\item The following convergence statements hold
\begin{subequations}\label{Stmt Convergence Interface Functionals}
\begin{equation} \label{Stmt Strong Convergence Non-Linear Interface Functionals}
\sum_{\b \, \in  \, \Bn} \beta(\b)\, \pn^{2} (\b) \xrightarrow[n\rightarrow \infty]{}
\sum_{\b \, \in  \, \B} \beta(\b)\, p^{2} (\b).
\end{equation}
\begin{equation} \label{Stmt Strong Convergence Forcing Interface Functionals}
\sum_{\b \, \in  \, \Bn} f(\b)\, \pn (\b) \xrightarrow[n\rightarrow \infty]{}
\sum_{\b \, \in  \, \B} f(\b)\, p (\b).
\end{equation}
\end{subequations}

\item $\{\pn: n\in \N \}$  converges strongly to $p$ in $V$.
\end{enumerate}
\end{theorem}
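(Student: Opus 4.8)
The plan is to mimic the development of \textit{Theorems} \ref{Th weak convergence of the solutions in V} and \ref{Th weak convergence of the whole sequence} and of \textit{Lemma} \ref{Th norms convergence}, the decisive simplification being that now $\beta\in\ell^{1}(\B)$ by \textit{Proposition} \ref{Th scaled storage coefficient convergence}, so every $\ell^{2}$-type interface estimate used there can be replaced by a dominated-convergence argument for series. First I would record a uniform bound for $\{\pn\}$ in $V$: testing \eqref{Pblm Variational n stage Good Coefficients} with $\pn$, the storage term on the left is non-negative, and since $\pn(0)=0$ one has $\sup_{[0,1]}|\pn|\le\|\del\pn\|_{L^{1}(0,1)}\le\|\pn\|_{V}$, so the right-hand side is bounded by $\big(\K_{(0,1)}\|F\|_{L^{2}(0,1)}+\|f\|_{\ell^{1}(\B)}\big)\|\pn\|_{V}$ while the left-hand side dominates $\|\pn\|_{V}^{2}$; hence $\sup_{n}\|\pn\|_{V}<\infty$.

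For (i), take any weakly convergent subsequence $\pnk\wconv\xi$ in $V$. By the compact embedding $H^{1}(0,1)\hookrightarrow C[0,1]$ the convergence is uniform, so $M\defining\sup_{k}\|\pnk\|_{\infty}<\infty$ and $\pnk(\b)\to\xi(\b)$ for every $\b\in\B$. Fix $q\in V$, test \eqref{Pblm Variational n stage Good Coefficients}, and write the interface sums as $\sum_{\b\in\B}(\cdot)\,\ind_{\Bnk}(\b)$. Each summand is dominated, uniformly in $k$, by $\beta(\b)\,M\,\|q\|_{\infty}$ (respectively $|f(\b)|\,M\,\|q\|_{\infty}$), which is summable over $\B$, and it converges pointwise in $\b$ to $\beta(\b)\,\xi(\b)\,q(\b)$ (respectively $f(\b)\,\xi(\b)\,q(\b)$), because $\Bn\underset{n}\uparrow\B$ forces $\ind_{\Bnk}(\b)\to 1$ eventually. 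The dominated convergence theorem for series, together with the weak convergence $\int_{0}^{1}\del\pnk\,\del q\to\int_{0}^{1}\del\xi\,\del q$, shows that $\xi$ solves \eqref{Pblm Variational Microstructure Good Coefficients}; by uniqueness (\textit{Theorem} \ref{Th well posedness good coefficient problems}) $\xi=p$, and since the argument applies to every weakly convergent subsequence, the whole sequence satisfies $\pn\wconv p$ in $V$, and consequently $\pn\to p$ in $C[0,1]$.

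Part (ii) is then immediate from the same device: with $M$ now denoting $\sup_{n}\|\pn\|_{\infty}<\infty$ and $\pn(\b)\to p(\b)$ for all $\b\in\B$, the terms $\beta(\b)\,\pn^{2}(\b)\,\ind_{\Bn}(\b)$ and $f(\b)\,\pn(\b)\,\ind_{\Bn}(\b)$ are dominated by $\beta(\b)\,M^{2}$ and $|f(\b)|\,M$ respectively and converge pointwise in $\b$ to $\beta(\b)\,p^{2}(\b)$ and $f(\b)\,p(\b)$, so dominated convergence yields \eqref{Stmt Convergence Interface Functionals}. For part (iii), test \eqref{Pblm Variational n stage Good Coefficients} on the diagonal with $\pn$, let $n\to\infty$ using \eqref{Stmt Strong Convergence Forcing Interface Functionals} and $\int_{0}^{1}F\pn\to\int_{0}^{1}Fp$, and compare with \eqref{Pblm Variational Microstructure Good Coefficients} tested with $p$; this gives $\|\pn\|_{V}^{2}+\sum_{\b\in\Bn}\beta(\b)\pn^{2}(\b)\to\|p\|_{V}^{2}+\sum_{\b\in\B}\beta(\b)p^{2}(\b)$, and subtracting \eqref{Stmt Strong Convergence Non-Linear Interface Functionals} leaves $\|\pn\|_{V}\to\|p\|_{V}$. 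Weak convergence together with convergence of norms in the Hilbert space $V$ gives $\|\pn-p\|_{V}\to 0$.

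I do not expect a serious obstacle here; the only point requiring care is the passage to the limit inside the interface series, where the uniform $L^{\infty}$-bound on $\pn$ — obtained from the $V$-bound and the boundary condition through the one-dimensional Sobolev embedding — and the summability of $\beta$ and $f$ are precisely what make the dominated-convergence argument legitimate, in contrast with the unscaled model of \textit{Section} \ref{Sec A sigma finite Interface Microstructure} where only $\ell^{2}$-control was available and the sandwich argument of \textit{Lemma} \ref{Th norms convergence} was needed.
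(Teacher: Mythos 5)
Your proposal is correct and follows essentially the same route as the paper: a uniform $V$-bound from testing on the diagonal, identification of every weak subsequential limit with $p$ via uniqueness, tail control of the interface sums through the $\ell^{1}$-summability of $\beta$ and $f$ together with the bound $\sup_{\b}\vert \pn(\b)\vert\leq \Vert \pn\Vert_{V}$, and finally norm convergence plus weak convergence to upgrade to strong convergence in $V$. The only cosmetic difference is that you package the interface limit as a dominated-convergence-for-series argument, whereas the paper carries out the equivalent $\epsilon/3$ splitting over $\B_{k}$, $\B_{n}-\B_{k}$ and $\B-\B_{k}$ by hand.
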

\begin{proof}
\begin{enumerate}[(i)]
\item The result follows using the techniques presented in \textsc{Theorem} \ref{Th boundedness of the sequence of solutions} yield the boundedness of the sequence $\{\pn: n\in \N \}$. Next, using the reasoning of \textsc{Theorem} \ref{Th weak convergence of the solutions in V} the weak convergence follows. 

\item Due to the weak convergence of the solutions and the fact that the evaluation is a continuous linear functional, it follows that $\pn(\b) \rightarrow p(\b)$ and $\pn^{2}(\b) \rightarrow p^{2}(\b)$ for all $\b\in \B$. Let $M> 0$ be such that $\Vert p \Vert_{V}^{2}\leq M$ and $\Vert \pn \Vert_{V}^{2}\leq M$ for all $n\in \N$. Fix $k\in \N$ such that $n > k$ implies $\Big\vert \sum\limits_{\B - \Bn } \beta(\b)\Big\vert \leq \dfrac{\epsilon}{3\, M^{2}}$, which we know to exist since $\beta\in \ell^{1}(\B)$. Then, for any $n > k$ it holds that
\begin{equation*} 
\begin{split}
\bigg\vert \sum_{\b \, \in  \, \Bn} \beta(\b)\, \pn^{2} (\b) -
\sum_{\b \, \in  \, \B}  & \beta(\b)\, p^{2} (\b) \bigg\vert \\
& \leq  
\sum_{\b \, \in  \, \Bk} \beta(\b)\,\left\vert \pn^{2} (\b) -
 p^{2} (\b) \right\vert + 
 \sum_{\b \, \in  \, \Bn - \Bk} \beta(\b)\, \left\vert\pn^{2} (\b) \right\vert 
+  \sum_{\b \, \in  \, \B - \Bk} \beta(\b)\, \left\vert
p^{2} (\b) \right\vert\\
& \leq 
\sum_{\b \, \in  \, \Bk} \beta(\b)\,\left\vert \pn^{2} (\b) -p^{2} (\b) \right\vert 
+ \Vert
 \pn\Vert_{V}^{2}    \sum_{\b \, \in  \, \B - \Bk} \beta(\b)\, 
 + \Vert
   p\Vert_{V}^{2} \sum_{\b \, \in  \, \B - \Bk} \beta(\b) \\
& \leq  
\sum_{\b \, \in  \, \Bk} \beta(\b)\,\left\vert \pn^{2} (\b) -p^{2} (\b) \right\vert 
+ \frac{2}{3}\, \epsilon . \\
%
%
%
\end{split}
\end{equation*}
Since $k\in \N$ is fixed, choose $N\in \N$ such that $n\geq N$ implies 
\begin{align*}
& \vert \pn^{2}(\b) - p^{2}(\b) \vert \leq \frac{\epsilon}{3} \;
\big\Vert \beta \big\Vert_{\ell^{1}(\B)}^{-1} ,&
& \text{for all } \b\in \Bk.
\end{align*}
Hence, combining with the previous estimate, the convergence statement \eqref{Stmt Strong Convergence Non-Linear Interface Functionals} follows. For the statement \eqref{Stmt Strong Convergence Forcing Interface Functionals} it suffices to combine the strong convergence of the forcing terms $\big\Vert f\ind_{\Bn} -f\big\Vert_{\ell^{1}(\B)}\xrightarrow[n \, \rightarrow \, \infty]{}0$ with the weak convergence of the solutions $\pn \wconv p$. This completes the second part.

\item Again, the result follows combining the convergence statements \eqref{Stmt Strong Convergence Non-Linear Interface Functionals} and \eqref{Stmt Strong Convergence Forcing Interface Functionals}, with the techniques presented in \textsc{Lemma} \ref{Th norms convergence}, used to attain the strong convergence of the solutions. 
%
%
\qed
\end{enumerate}
\end{proof} 
Next, we present the closest version of a strong form of \textsc{Problem} \eqref{Pblm Variational Microstructure Good Coefficients}.
%
%
\begin{theorem}\label{Th pseudo strong form}
Let $\B\subseteq [0,1]$ be a microstructure and let $\B'$ be its set of limit points.
Then, \textsc{Problem} \eqref{Pblm Variational Microstructure Good Coefficients} is a weak formulation of the following strong problem.
\begin{subequations}\label{Pblm Strong Microstructure Good Coefficients}
\begin{equation} \label{Eq limit laplacian Microstructure Good Coefficients}
%
-\del^{2} p 
= F \quad  \text{in the sense } \; \;L^{2} \big((0,1) - \B ' 
\big) , 
%
\end{equation}
\begin{equation} \label{Eq boundary conditions limit problem}
p (0) = 0 . 
\end{equation}
Together with the interface fluid transmission conditions for isolated points of $\B$
%
    \begin{equation}\label{Eq strong limit interface isolated points}      
    \begin{split}
    p \left(\bnot^{-}\right) & = p\left(\bnot^{+}\right)  , 
    \\
    \del p \left( \bnot^{-}\right)  -  \del p \left( \bnot^{+}\right) + 
   \beta (\bnot) \,  p \left( \bnot\right) & =  f\left( \bnot\right) \, ,
    \quad \forall\, \bnot \in \B - \B'.
    \end{split}
    \end{equation}
And the non-localizable fluid transmission conditions for limit points of $\B$
%
%
%
%
\begin{equation}\label{Eq strong limit interface accumulation points extra hypothesis} 
\begin{split} 
 p \left(x^{-}\right) & = p\left(x^{+}\right)  , 
    \\    
\lim\limits_{n \, \rightarrow \, \infty}
\frac{1}{\frac{1}{n}}
\int_{x - \frac{1}{n}}^{x + \frac{1}{n}}\del p 
+  \beta(x)\, p (x) \ind_{\B}(x) & = 
f(x) \, .
\end{split} 
\end{equation}
 
\end{subequations}
\end{theorem}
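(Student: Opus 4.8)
The plan is to recover each ingredient of the strong \textit{Problem} \eqref{Pblm Strong Microstructure Good Coefficients} from the single variational identity \eqref{Pblm Variational Microstructure Good Coefficients} by testing against suitably localized functions, after sorting the points of $(0,1)$ into three types: points lying in an open subinterval disjoint from $\B$, isolated points of $\B$, and limit points of $\B$. Two facts come for free: since $p\in V$ the boundary condition $p(0)=0$ of \eqref{Eq boundary conditions limit problem} holds, and since $p\in H^{1}(0,1)$ it is absolutely continuous, so the continuity lines $p(\bnot^{-})=p(\bnot^{+})$ in \eqref{Eq strong limit interface isolated points} and $p(x^{-})=p(x^{+})$ in \eqref{Eq strong limit interface accumulation points extra hypothesis} hold automatically; the content lies entirely in the flux relations. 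Finally, because $\cl(\B)=\B\cup\B'$, the set $(0,1)-\B'$ is the disjoint union of the open set $(0,1)\setminus\cl(\B)$ with the isolated points $\B-\B'$, so the first two cases below together exhaust $(0,1)-\B'$, the genuinely smooth part carrying the $L^{2}$ equation \eqref{Eq limit laplacian Microstructure Good Coefficients} and the isolated points carrying instead the transmission relation \eqref{Eq strong limit interface isolated points}.

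For the interior part, $(0,1)\setminus\cl(\B)$ is open, hence a countable union of open intervals; on each such interval $I$ every $\varphi\in C_{0}^{\infty}(I)$ lies in $V$ with $\varphi\vert_{\B}=0$, so testing \eqref{Pblm Variational Microstructure Good Coefficients} with $\varphi$ kills both interface series and leaves $\int_{I}\del p\,\del\varphi=\int_{I}F\,\varphi$; letting $\varphi$ vary, $-\del^{2}p=F$ in $L^{2}(I)$, and assembling over the components gives \eqref{Eq limit laplacian Microstructure Good Coefficients}. For an isolated point $\bnot\in\B-\B'$ choose $\delta>0$ with $(\bnot-\delta,\bnot+\delta)\cap\B=\{\bnot\}$ and test with a smooth bump $\varphi$ supported in that interval with $\varphi(\bnot)=1$; only the $\b=\bnot$ summands of the two series survive, and splitting $\int_{\bnot-\delta}^{\bnot+\delta}\del p\,\del\varphi$ at $\bnot$ and integrating by parts on each half (legitimate since $-\del^{2}p=F$, hence $p\in H^{2}$, is already known away from $\bnot$) produces $\int F\varphi+[\del p(\bnot^{-})-\del p(\bnot^{+})]\varphi(\bnot)$; cancelling the bulk part and using $\varphi(\bnot)=1$ yields the jump relation \eqref{Eq strong limit interface isolated points}.

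The delicate case, and the crux of the theorem, is a limit point $\bnot\in\B'$, where integration by parts cannot be applied: every neighbourhood of $\bnot$ contains infinitely many points of $\B$, so $-\del^{2}p$ carries infinitely many atoms near $\bnot$ and is not $L^{2}$ there. Instead I would fix a smooth bump $\rho$ with $\rho(0)=1$ and test \eqref{Pblm Variational Microstructure Good Coefficients} with the rescaled translate $\Teps: y\mapsto\rho\big((y-\bnot)/\epsilon\big)$, which lies in $V$ for all small $\epsilon$; write $\Ueps=\supp\Teps=\bnot+\epsilon\,\supp\rho$. Three of the four resulting terms are controlled as $\epsilon\to0$: the bulk term $\int_{\Ueps}F\,\Teps\to0$ because $\Vert\Teps\Vert_{L^{2}(0,1)}\to0$; and, using $\Vert\Teps\Vert_{L^{\infty}}=\Vert\rho\Vert_{L^{\infty}}$, $\Teps(\bnot)=1$, the bound $\sup_{\b\in\B}|p(\b)|\leq\Vert p\Vert_{V}$, and the fact that for $\epsilon$ small enough $\Ueps$ meets any fixed finite $\Bn$ only at $\bnot$, the $\ell^{1}(\B)$-summability of $\beta$ and $f$ gives $\sum_{\b\in\Ueps\cap\B}\beta(\b)p(\b)\Teps(\b)\to\beta(\bnot)p(\bnot)\ind_{\B}(\bnot)$ and $\sum_{\b\in\Ueps\cap\B}f(\b)\Teps(\b)\to f(\bnot)$ (the factor $\ind_{\B}(\bnot)$ and the value $f(\bnot)$ being $0$ when $\bnot\in\B'-\B$). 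Hence $\int_{\Ueps}\del p\,\del\Teps$ has a limit, and transposing the terms gives $\lim_{\epsilon>0}\int_{0}^{1}\del p\,\del\Teps+\beta(\bnot)p(\bnot)\ind_{\B}(\bnot)=f(\bnot)$; taking $\rho$ to approximate a tent profile and $\epsilon=1/n$ identifies $\int_{0}^{1}\del p\,\del\Teps$ with the non-localizable flux $\tfrac{1}{1/n}\int_{\bnot-1/n}^{\bnot+1/n}\del p$ of \eqref{Eq strong limit interface accumulation points extra hypothesis}.

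The main obstacle is exactly this last step: since no localization by integration by parts is available at $\B'$, every estimate at a limit point must be wrung out of the variational identity itself, the only lever being the $\ell^{1}(\B)$-summability of $\beta$ and $f$ — which is precisely why the scaled storage coefficient of \textit{Definition} \ref{Def scaled storage coefficient}, equation \eqref{Eq scaled storage coefficient}, was imposed (compare \textit{Lemma} \ref{Th microstructure accumulation points}); one also has to verify that the limiting flux functional is independent of the admissible profile $\rho$, equivalently that the symmetric difference quotient in \eqref{Eq strong limit interface accumulation points extra hypothesis} genuinely converges, which follows because the other three terms of the tested identity have $\rho$-independent limits. For the converse reading of ``\eqref{Pblm Variational Microstructure Good Coefficients} is a weak formulation of \eqref{Pblm Strong Microstructure Good Coefficients}'', one multiplies the strong relations by an arbitrary $q\in V$, integrates, and reverses the above computations — summing the isolated-point jumps against $q$ and passing to the limit in the neighbourhood relations, the interchange of sum and limit being again justified by the $\ell^{1}$ control.
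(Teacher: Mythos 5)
Your proposal is correct and follows essentially the same route as the paper: the boundary and continuity conditions come for free from $p\in V\subseteq H^{1}(0,1)$, the $L^{2}$ identity and the jump relation at isolated points are obtained by testing with compactly supported functions in punctured neighbourhoods and integrating by parts on each half, and the non-localizable condition at limit points is extracted by testing with rescaled tent-type profiles and passing to the limit using the $\ell^{1}(\B)$ summability of $\beta$ and $f$ (dominated convergence for the series). The only cosmetic difference is that you work with a general smooth bump $\rho$ and then specialize to the tent profile, whereas the paper tests directly with the piecewise-linear tent $q_{n}(t)=q\big(n(t-x)\big)$.
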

\begin{proof}
The boundary condition  \eqref{Eq boundary conditions limit problem} holds because $p \in V$. Let $x\in (0,1) - \B '$, then, there exists $\delta_{0}> 0 $ such that $(x - \delta, x+ \delta)\cap \B = \{x\}\cap \B$ for all $\delta \in (0, \delta_{0})$. Test \textsc{Problem} \eqref{Pblm Variational Microstructure Good Coefficients} with $\varphi\in C_{0}^{\infty}(x-\delta, x)$ to get
\begin{equation*}
-\big\langle \del^{2} p, \, \varphi\big\rangle_{\D '(x - \delta, x), D(x - \delta, x)}
= \int_{x - \delta}^{x }   \del p \, \del \varphi  =
\int_{x - \delta}^{x } F\varphi .
\end{equation*}
Since the above holds for every smooth function with support contained in $(x - \delta, x)$, we conclude that $- \del^{2} p = F$ in $L^{2}(x - \delta, x)$. Similarly, it follows that $- \del^{2} p = F$ in $L^{2}(x, x + \delta)$, therefore $\del p\in H^{1}(x - \delta, x)\cap H^{1}(x, x+ \delta)$ and the statement \eqref{Eq limit laplacian Microstructure Good Coefficients} follows. Now, choose $\varphi \in C_{0}^{\infty}(x - \delta, x + \delta)$ and test \textsc{Problem} \eqref{Pblm Variational Microstructure Good Coefficients} to get
%
\begin{equation*}
\int_{x - \delta}^{x + \delta}   \del p \, \del \varphi  
+\beta(x)\, p(x) \, \varphi(x) 
= \int_{x - \delta}^{x + \delta} F\varphi 
+ f(x) \, p(x)  \, \varphi(x) .
\end{equation*}
In the expression above, break the interval conveniently in order to use integration by parts 
\begin{equation*}
\begin{split}
\int_{x - \delta}^{x + \delta} F\varphi 
+ f(x) \, p(x)  \, \varphi(x)
%
& = \int_{x - \delta}^{x }   \del p \, \del \varphi  
+ \int_{x }^{x + \delta}   \del p \, \del \varphi 
+\beta(x)\, p(x)  \, \varphi(x)  \\
& = - \int_{x - \delta}^{x }\del^{2} p \, \varphi
- \int_{x }^{x + \delta}   \del^{2} p \,  \varphi 
+ \del p\, \varphi \big\vert_{x - \delta}^{x}
+ \del p\, \varphi \big\vert_{x }^{x + \delta}
+\beta(x)\, p(x)  \, \varphi(x)  .
\end{split}
\end{equation*}
From the initial part of the proof, the first summand of the left hand side, cancels with the first two summands of the right hand side. Evaluating the boundary terms and recalling that $\varphi (x - \delta) = \varphi(x + \delta) = 0$ this gives
\begin{equation*} 
f(x) = \del p(x^{-})
- \del p(x^{+})
+   \beta(x)\, p (x).
\end{equation*}
Consequently, the normal flux balance condition in \eqref{Eq strong limit interface isolated points} follows for points $x\in \B -\B'$. The normal stress balance condition in \eqref{Eq strong limit interface isolated points} follows from the continuity of $p$ across the interface, which holds for any function in $V\subseteq H^{1}(0,1)$. The normal stress balance in \textsc{Statement} \eqref{Eq strong limit interface accumulation points extra hypothesis} holds due to the continuity of $p$ at any point of $x\in (0,1)$. Now, fix $x\in \B'$ and test the problem \eqref{Pblm Variational Microstructure Good Coefficients} with $q_{n}(t)\defining q\big(n(t - x)\big)$, where $q(t) \defining (t +1)\ind_{(-1,0)}(t) + (1 - t)\ind(0,1)(t)$; this gives
\begin{equation*} 
\frac{1}{\frac{1}{n}}\int_{x - \frac{1}{n}}^{x+\frac{1}{n}}\del p 
+  \sum_{\b \, \in  \, \B} \beta(\b)\, p (\b) \,  q_{n}(\b)
= \int_{x - \frac{1}{n}}^{x+\frac{1}{n}} F\,q_{n} 
+ \sum_{\b \, \in \, \B} f(\b) \,  q_{n}(\b) .
\end{equation*}
Taking limits in the expression above and recalling the Lebesgue Dominated Convergence Theorem, the result follows.    
\qed
\end{proof}
%
%
%
%
Notice that for the solution of \textsc{Problem} \eqref{Pblm Variational Microstructure Good Coefficients}, the following equality holds in the sense of distribution by definition 

%
\begin{align*}
& -\del^{2} p + \sum_{\b \, \in  \, \B} \beta(\b)\, p (\b) \,  \delta_{\{\b\}}  
= F + \sum_{\b \, \in \, \B} f(\b) \delta_{\{\b\}}
 \, , &
& \text{in }
\D '  (0, 1) .
\end{align*}
Where $\delta_{\{\b\}}(q) \defining q(\b)$ is the Dirac evaluation functional. 
It is easy to observe that if $x$ is a limit point of $\B$, both series in the expression above will pick up infinitely many non-null terms for test functions satisfying $\supp(\varphi)\cap \B' \neq \emptyset$. Therefore, the techniques used in \textsc{Theorem} \ref{Th pseudo strong form} to derive point-wise statements from weak variational ones, do not apply. Again, this fact is unfortunate because, as pointed out at the end of \textsc{Section} \ref{Sec A sigma finite Interface Microstructure}, several important fractal microstructures are Notice that, by definition of distributions, the following equality holds in the sense of distribution for the solution of \textit{Problem} \eqref{Pblm Variational Microstructure Good Coefficients} or dense in $[0,1]$. If $\B$ is dense, very little can be said about the point-wise behavior at any point of the domain and if $\B$ is a perfect set, the transmission conditions can not be described locally at any point of the interface. A positive aspect of this model is that the aforementioned cases do not become trivial or uninteresting. Additionally, if the set of accumulation points $\B '$ has null Lebesgue measure the non-localizable interface conditions are not relevant for the global phenomenon. Fortunately, this is the case for very important fractal microstructures e.g., the collection of extremes of the removed intervals in the construction of the Cantor set, the Sierpinski Triangle in 2-D, etc.  
%
%
%
%
%
%
\section{Numerical Experiments}\label{Sec Numerical Experiments}
%
%
%
%
In this section we present two types of numerical experiments. The first type are verification examples, supporting our homogenization conclusions for a problem whose asymptotic behavior is known exactly.  The second type are of exploratory nature, in order to gain heuristic understanding of the probabilistic variations of the model. The experiments are executed in a MATLAB code using the Finite Element Method (FEM) which is an adaptation of the code \textbf{fem1d.m} \cite{PeszynskaFEM}.
%
%
\subsection{General Setting}
%
%
For the sake of simplicity, in all the examples below, the microstructure $\B\subseteq [0,1]$ is the collection of extremes from the removed intervals in the construction of the Cantor set. In addition, its $\sigma$-finite development $\{\Bn: n\geq 0 \}$ is the natural one i.e., 

\begin{align}\label{Def Partial Triadic Extremes Set}
\begin{split}
\B_{0} \defining \big\{0, 1 \big\}&  , \\
%
\B_{n+1}\defining \Big\{\frac{1}{3}\, \b: \b\in \Bn  \Big\} \cup 
\Big\{1 - \frac{1}{3}\, \b: \b\in \Bn  \Big\} &,\quad \forall \, n\geq 1 . 
\end{split}
\end{align}
For the experiments, it will be shown that the sequences $\{\pn: n\in \N \}$ are Cauchy ($n\in \N$ indicates the stage of the Cantor set construction). In all the cases, the computations are made for the stages $6, 7,8, 9$. For each example, graphics of the solution for the $n$-stages chosen from $\{3,4,6,9\}$ are displayed, based on optical neatness: two or three graphs for the solution together with their corresponding derivatives. In addition, for visual purposes vertical lines in the derivatives' graphics are introduced, to highlight the fractal structure of the function. For all the examples, the forcing term $F\in L^{2}(0,1)$ is set equal to zero and the interface forcing term $f: \B\rightarrow \R$, is given by the following expression 

\begin{equation}\label{Def Deterministic Interface Forcing Term}
f (\b) \defining \sum_{n \, \in \, \N} \frac{1}{3^{n}}   
\, \ind_{\Bn - \B_{n-1}}(\b)\, .
\end{equation}
%
%
%
%
%
\subsection{The Examples}
%
%
%
%
\begin{example}[Unscaled Example]\label{Ex Basic Example}
In the first example we implement the model studied in \textsc{Section} \ref{Sec A sigma finite Interface Microstructure}, in order to verify its conclusions. In particular, it must hold that $\Vert \pn\Vert_{V} \rightarrow 0$. We set the storage coefficient $\beta = 1$ and the forcing terms $F = 0$, $f$ defined in \eqref{Def Deterministic Interface Forcing Term} above. The convergence is presented \textsc{Table} \ref{Tbl Convergence Table Ex 1} below, together with the corresponding graphics in \textsc{Figure} \ref{Fig Solutions Ex1}. We observe convergence as established by the theoretical discussion of \textsc{Section} \ref{Sec A sigma finite Interface Microstructure}.

%
\begin{table*}[h!]
\def\arraystretch{1.4}
\begin{center}
\rowcolors{2}{gray!25}{white}
\begin{tabular}{c c c c c c}
    \hline
    \rowcolor{gray!50}
Stage $n$ &  $\#$ Nodes & $\Vert  p_{n}   \Vert_{L^{2}(0,1)} $ & $\Vert  p_{n}   \Vert_{H^{1}(0,1)}$
& $\Vert  p_{n}- p_{n - 1}  \Vert_{L^{2}(0,1)} $ & $\Vert  p_{n}- p_{n-1}  \Vert_{H^{1}(0,1)}$  \\ 
    \hline
6 & $3^{6} + 1$ &  0.011432576436 &  0.050569870333 & 0.008280418446366  & 0.022150608375185 \\
7  & $3^{7} + 1$ & 0.006705537618 &  0.040243882138 & 0.004790279224858 & 0.014569849160095 \\ 
8  & $3^{8} + 1$ & 0.004020895404 & 0.033172226789 & 0.002725704395092 & 0.010008013980805 \\ 
9  & $3^{9} + 1$ & 0.000334472470 &  0.000634665497 & 0.001567106680915 & 0.007680602752328 \\ 
    \hline
\end{tabular}
\caption{\textsc{Example} \ref{Ex Basic Example} : Convergence Table, $\beta = 1$.}\label{Tbl Convergence Table Ex 1}
\end{center}
\end{table*}
\begin{figure}[h!]
\vspace{-1.3cm}
        \centering
        \begin{subfigure}[Solution $p^{4}$, Stage $n = 4$.]
                { 
                {\includegraphics[scale = 0.45]{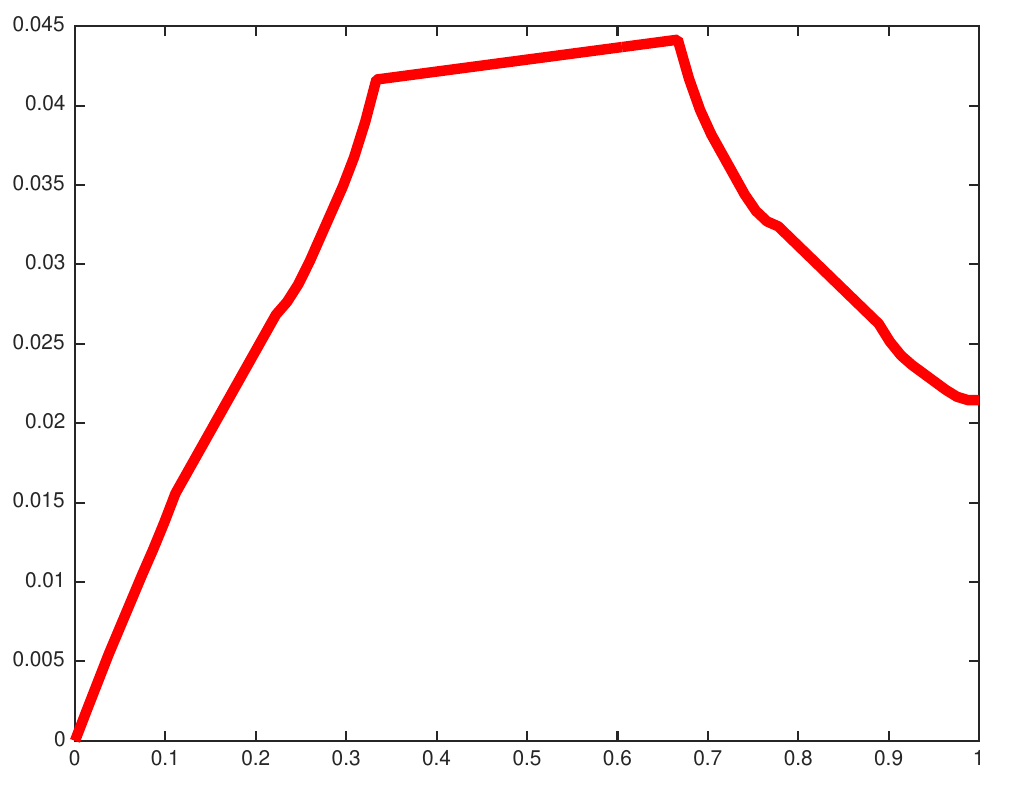} } }
        \end{subfigure} 
%
          \begin{subfigure}[Derivative $\del p^{4}$, Stage $n = 4$.]
                { 
                {\includegraphics[scale = 0.44]{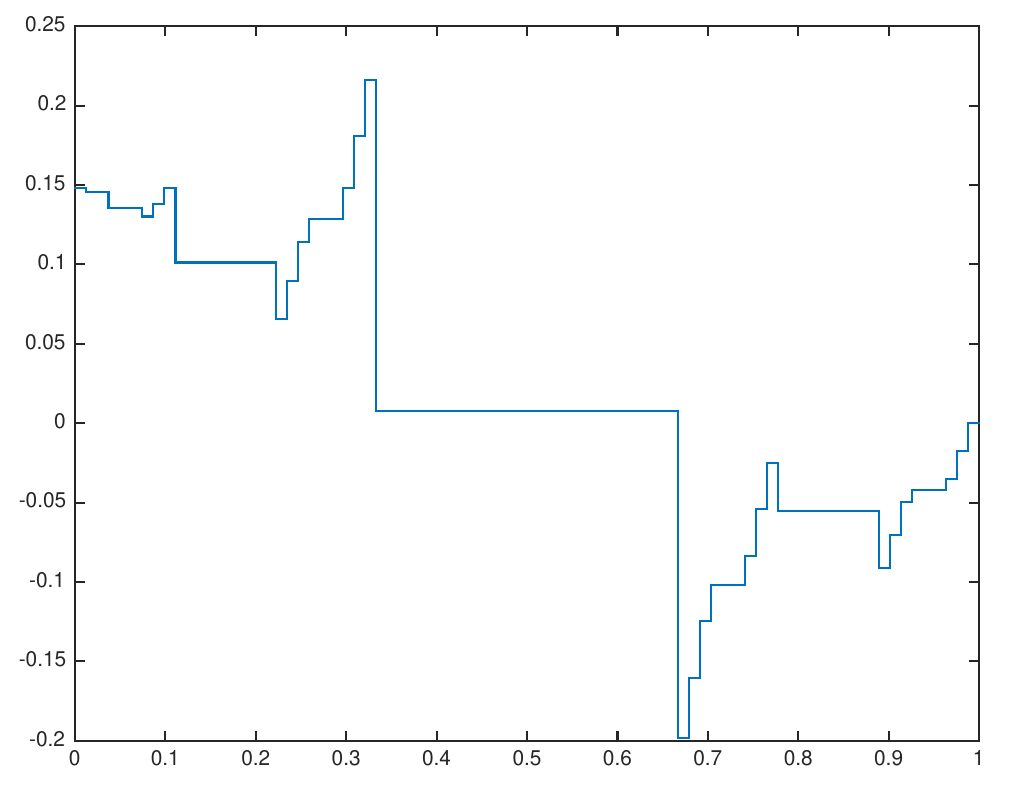} } }                
        \end{subfigure} 
 \\ 
        \begin{subfigure}[Solution $p^{9}$, Stage $n = 9$.]
                { 
{\includegraphics[scale = 0.45]{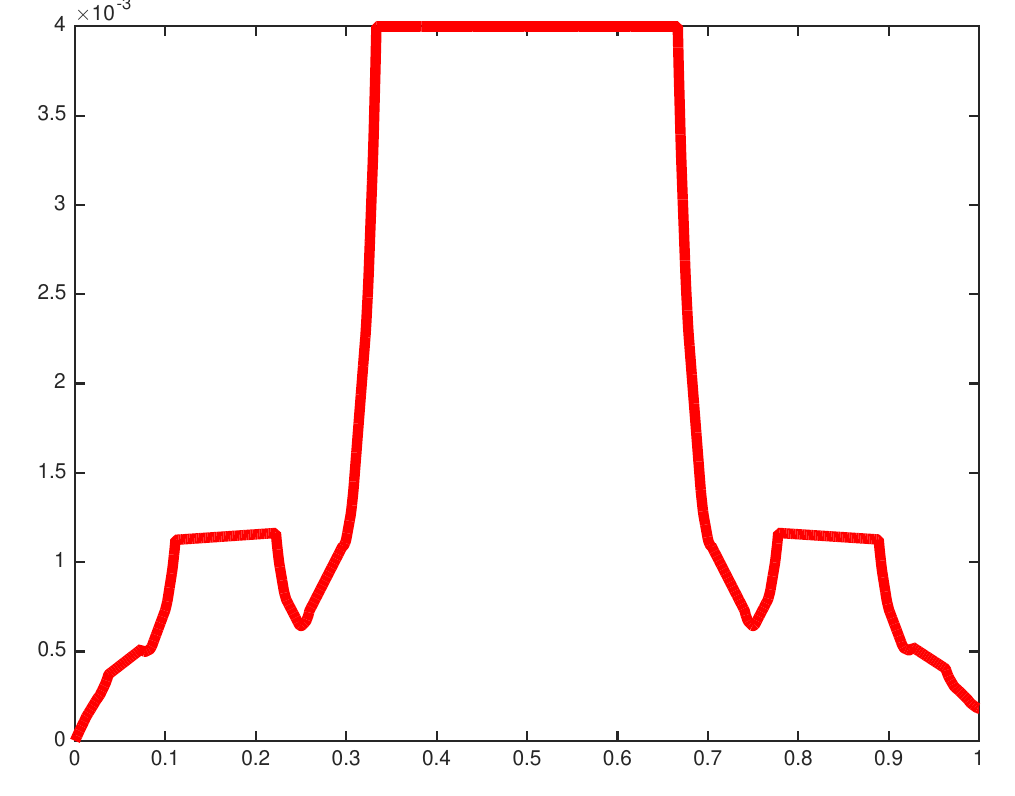} } }
        \end{subfigure}
%
        ~ 
          \begin{subfigure}[Derivative $\del p^{9}$, Stage $n = 9$.]
                { 
{\includegraphics[scale = 0.44]{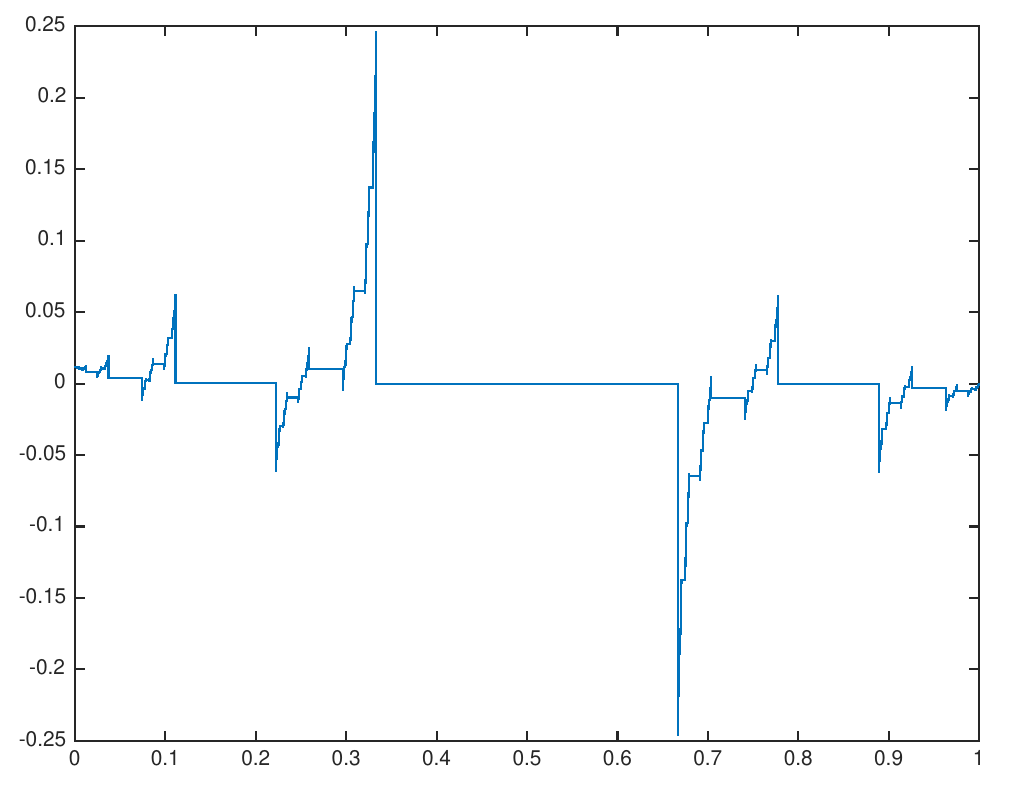} } }                
        \end{subfigure} 
\caption{\textsc{Solutions Example} \ref{Ex Basic Example}. Storage coefficient $\beta = 1$, forcing term $F = 0$ and interface forcing term $f$, defined in \textsc{Equation} \eqref{Def Deterministic Interface Forcing Term}. The functions depicted in figures (a) and (c) are respectively, the solutions when the microstructures $\B_{4}$ and $\B_{9}$ are present; see \eqref{Def Partial Triadic Extremes Set}. Figures (b) and (d) are the corresponding derivatives. The vertical lines in the derivatives' graphics are included for optical purposes only.}\label{Fig Solutions Ex1}
\end{figure}
\end{example}
\begin{example}[Scaled Example]\label{Ex Scaled Example} In the second example we implement the model introduced in \textsc{Section} \ref{Dec Fractal Scaling Modeling}, in order to illustrate the behavior of the solutions. We set $F = 0$, $f$ defined in \textsc{Equation}\eqref{Def Deterministic Interface Forcing Term} above and the storage coefficient

\begin{equation}\label{Eq Example scaled storage coefficient}
\beta (\b) \defining   \sum_{n \, \in \, \N} \big(\frac{2}{3} \big)^{n}   
\, \ind_{\Bn - \B_{n-1}}(\b)\, .
\end{equation}
Due to its fractal nature, the exact solution can not be described explicitly. Therefore, we only present the Cauchy behavior of the sequence of solutions in \textsc{Table} \ref{Tbl Convergence Table Ex 2} below, together with the corresponding graphics of the solutions in \textsc{Figure} \ref{Fig Solutions Ex2}. Again, convergence of the solutions is observed, as concluded by the theoretical results of \textsc{Section} \ref{Dec Fractal Scaling Modeling}.
%
%
\begin{table*}[h!]
\def\arraystretch{1.4}
\begin{center}
\rowcolors{2}{gray!25}{white}
\begin{tabular}{c c c c}
    \hline
     \rowcolor{gray!50}
Stage $n$  & $\#$ Nodes  & 
$\Vert  p_{n}- p_{n - 1}  \Vert_{L^{2}(0,1)} $ & $\Vert  p_{n}- p_{n-1}  \Vert_{H^{1}(0,1)}$  \\ 
    \hline
6  & $3^{6} + 1$ & 0.002708076748262 &  0.005143218742685 \\
7  & $3^{7} + 1$ & 0.001344770703905 &  0.002552130987523 \\ 
8  & $3^{8} + 1$ & 0.000670086347099 &  0.001271522606439\\ 
9  & $3^{9} + 1$ & 0.000334472470474 &  0.000634665496467 \\ 
    \hline
\end{tabular}
\caption{\textsc{Example} \ref{Ex Scaled Example}: Convergence Table, $\beta$ defined in \textsc{Equation} \eqref{Eq Example scaled storage coefficient}.}\label{Tbl Convergence Table Ex 2}
\end{center}
\end{table*}
\begin{figure}[h!] 
\vspace{-1.3cm}
        \centering
        \begin{subfigure}[Solution $p^{3}$, Stage $n = 3$.]
                { 
                {\includegraphics[scale = 0.45]{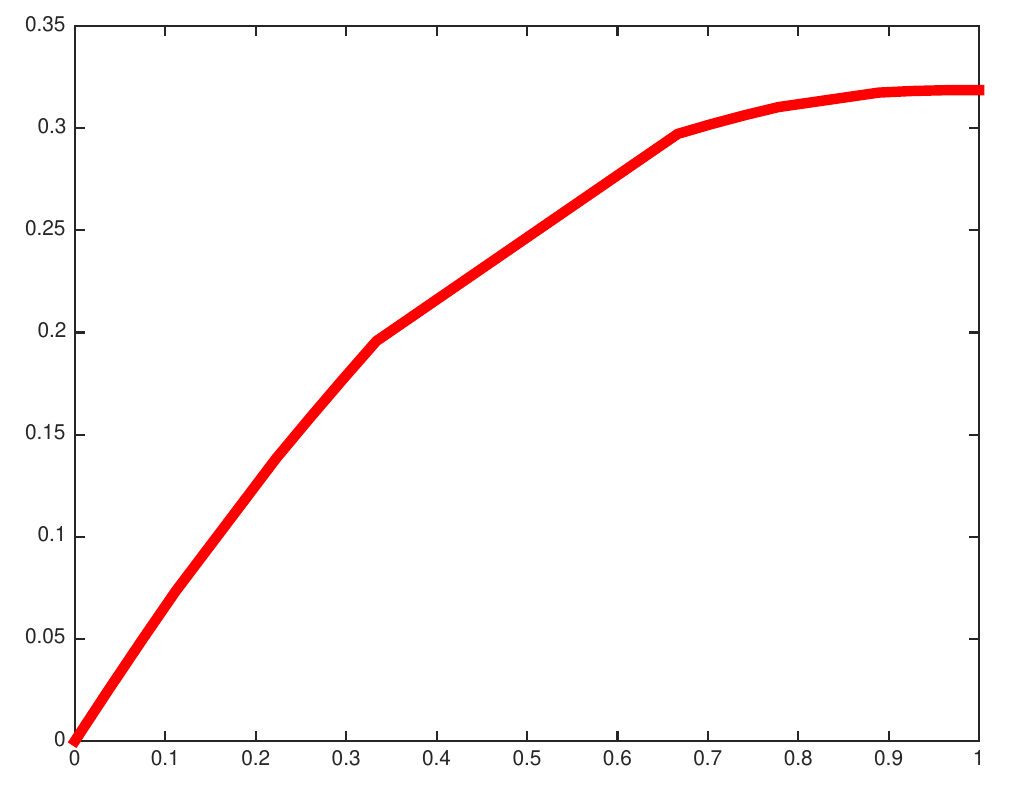} } }
        \end{subfigure} 
%
        \begin{subfigure}[Derivative $\del p^{3}$, Stage $n = 3$.]
                { 
{\includegraphics[scale = 0.44]{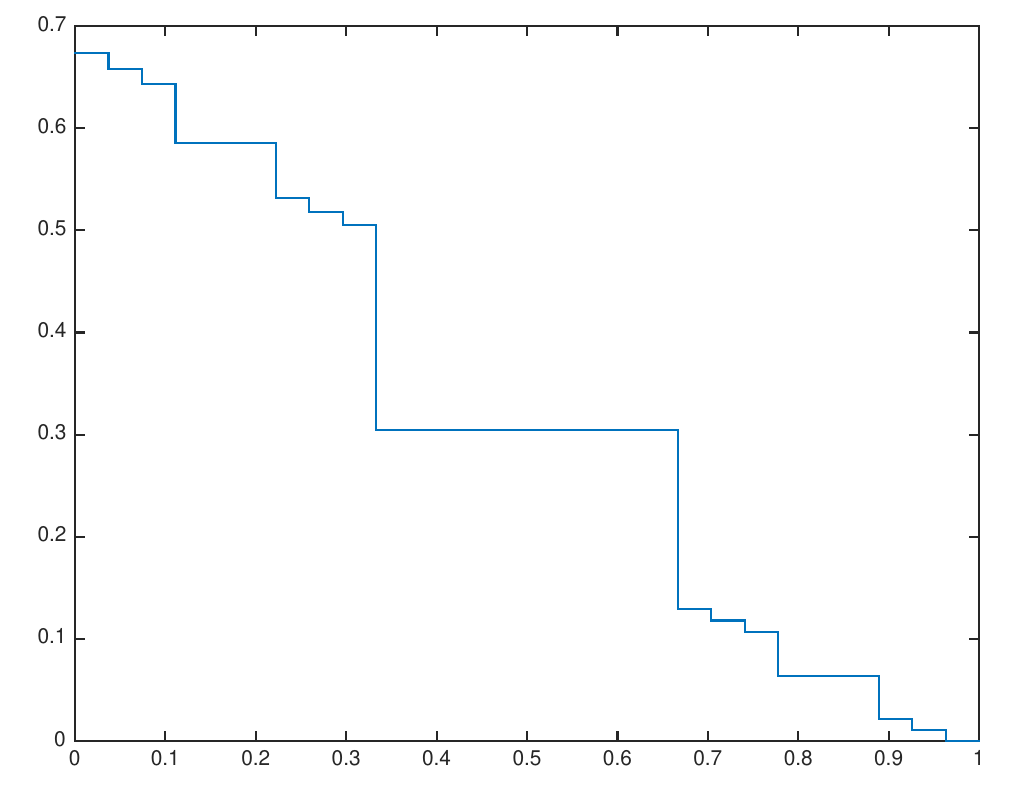} } }
        \end{subfigure}  
 \\ 
          \begin{subfigure}[Solution $p^{9}$, Stage $n = 9$.]
                { 
                {\includegraphics[scale = 0.45]{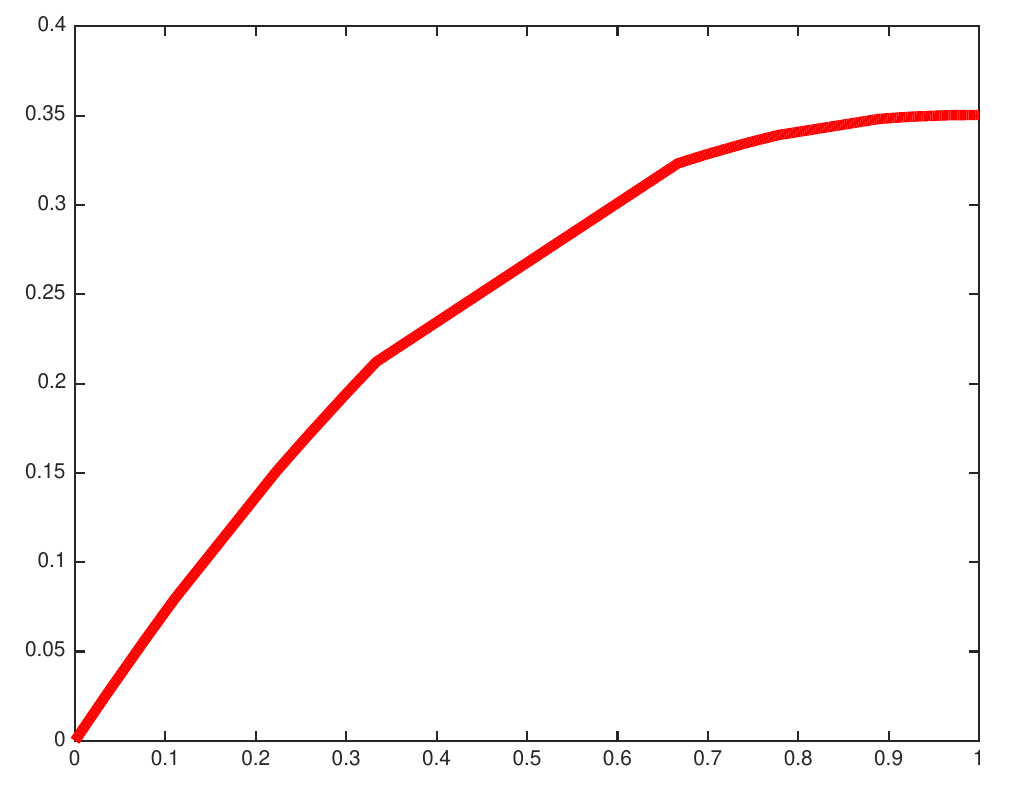} } }                
        \end{subfigure} 
%
        ~ 
          \begin{subfigure}[Derivative $\del p^{9}$, Stage $n = 9$.]
                { 
{\includegraphics[scale = 0.44]{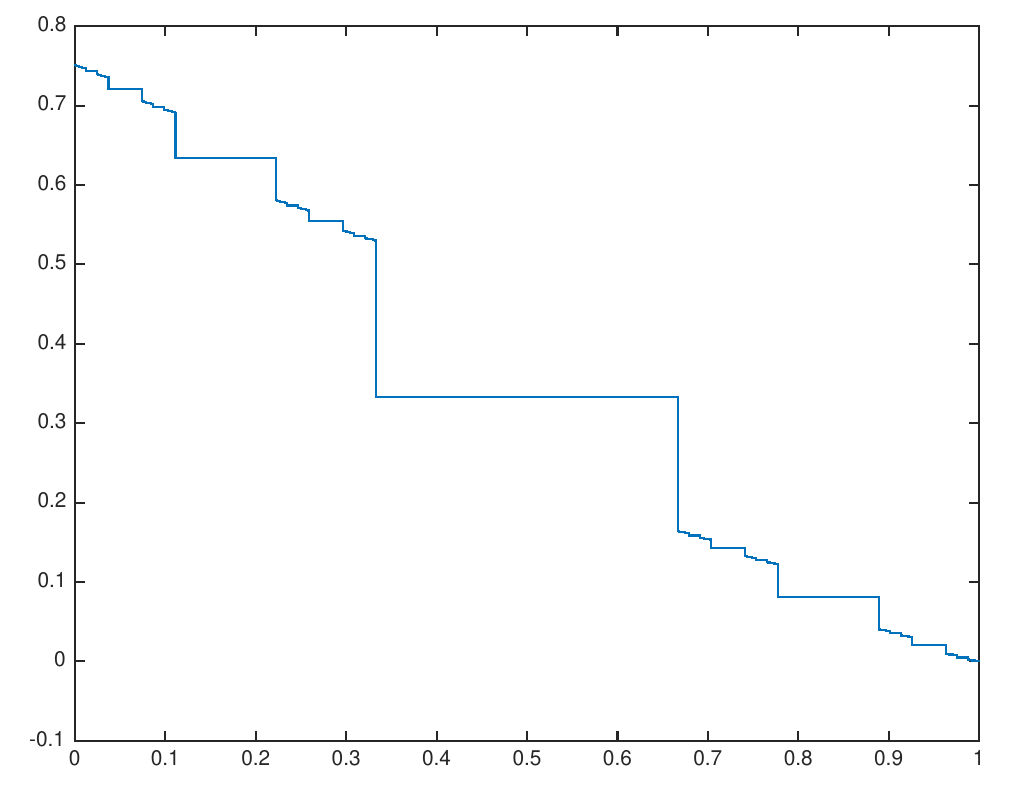} } }                
        \end{subfigure} 
\caption{\textsc{Solutions Example} \ref{Ex Scaled Example}. Storage coefficient $\beta$ defined in \textsc{Equation} \eqref{Eq Example scaled storage coefficient}, forcing term $F = 0$ and interface forcing term $f$, defined in \eqref{Def Deterministic Interface Forcing Term}. The functions depicted in figures (a) and (c) are the corresponding solutions when the microstructures $\B_{4}$ and $\B_{9}$ are present, see \eqref{Def Partial Triadic Extremes Set}. Figures (b) and (d) are the respective derivatives. The vertical lines in derivatives' graphics are included for optical purposes only.}\label{Fig Solutions Ex2} 
\end{figure}
\end{example}
\begin{example}[Random Behavior Example]\label{Ex Random Behavior Example}
This third and last example is a probabilistic variation of \textsc{Example} \ref{Ex Basic Example}. We simply introduce uncertainty in the storage coefficient $\beta$ by considering it a random variable, uniformly distributed on an interval i.e., 

\begin{align}\label{Def Probabilistic Experimental Storage Coefficient}
& \beta: \B\rightarrow \Big[\frac{3}{4}\, , \frac{5}{4} \Big] \, , &
&  \beta\sim \text{uniformly} .
\end{align} 
Due to the Law of Large Numbers, the average of $\beta$ is the constant function $\bar{\beta} = 1$. Therefore, due to the linearity of the differential equation, the ``averaged" function $\bar{p}$ of the solutions corresponding to the realizations is precisely the solution of \textsc{Example} \ref{Ex Basic Example}. Below, \textsc{Table} \ref{Tbl Convergence Ex3} presents the results for eight numerical experiments, each of them consisting in averaging the outcome of twenty random realizations; we denoted this average by $\bar{p}_{n}$. The experiments were executed for two different stages, namely $n = 4$ and $n = 8$, consequently finite versions of the storage coefficient $\beta_{i} \defining \beta\big\vert_{\B_{i}}$ (introduced in\eqref{Def Probabilistic Experimental Storage Coefficient}) were used, namely

\begin{align}\label{Def Finite Probabilistic Experimental Storage Coefficient}
&  \beta_{i}: \B_{i}\rightarrow \Big[\frac{3}{4}\, , \frac{5}{4} \Big] \, , &
&  \beta_{i}\sim \text{uniformly} \, , &
&  i = 4, 8 .
\end{align} 
%
%
%
%
%
\begin{table*}[h!]
\def\arraystretch{1.4}
\begin{center}
\rowcolors{2}{gray!25}{white}
 \begin{tabular}{c c c c c c}
\hline
\rowcolor{gray!50}
 & \multicolumn{2}{ c }{Stage $n = 4$} & \multicolumn{2}{ c }{Stage $n = 8$ }\\
\rowcolor{gray!50}
Experiment
& $\Vert  \, \bar{p}_{n}- \bar{p}  \, \Vert_{L^{2}(0,1)} $ 
& $\Vert  \, \bar{p}_{n}- \bar{p}  \,  \Vert_{H^{1}(0,1)} $ 
& $\Vert  \, \bar{p}_{n}- \bar{p}  \,  \Vert_{L^{2}(0,1)} $ 
&  $\Vert  \, \bar{p}_{n}- \bar{p}  \,  \Vert_{H^{1}(0,1)} $ 
\\
\hline
1
& 0.000119184000335  & 0.001061092385691
& 0.000003969114134 &  0.000255622745433
\\
2
& 0.000212826056946 &  0.001524984778876
& 0.000011927845271 &  0.000729152212225
\\
3 
& 0.000251807786501 &  0.002145242883204
& 0.000006707500555 &  0.000433471661184
\\
4
& 0.000118807313682  &  0.001071251072221
& 0.000006848283589 &  0.000399676826771
\\
5 
& 0.000200559658088  &  0.001539679789493
& 0.000003353682741 &  0.000236079667248
\\
6
& 0.000274728964017  & 0.001805225518962
& 0.000017705174519  & 0.001055292921996 
\\
7
& 0.000262099625343 &  0.001921561839120
& 0.000013849036017 &  0.000696257848189
\\
8
& 0.000338584909360 &  0.001974469520292
& 0.000004372910306 &  0.000289302820258
%
\\
\hline
Average
& 0.000250408088105 &  0.001817080230904
& 0.000008011140143  & 0.000473741408661
\\
Variance
& $0.005804534\times 10^{-6}$  & $0.164983904 \times 10^{-6} $
&  $0.000027860\times 10^{-6}$  & $0.083787371 \times 10^{-6}$
\\
\hline
\end{tabular}
\caption{\textsc{Example} \ref{Ex Random Behavior Example}: Experiments, Each Consisting of 20 Random Realizations.}\label{Tbl Convergence Ex3}
\end{center}
\end{table*}
In the table \ref{Tbl Convergence Ex3} below, the $L^{2}$ and $H^{1}$-norms for the difference of the partial averages $\bar{p}_{n}$ and the Ces\`aro average are displayed. Clearly, convergence is observed in both cases and, due to the Law of Large Numbers, better behavior is observed at the stage $n = 8$ over the stage $n=4$, as expected. The average and variance are also presented at the bottom of \textsc{Table} \ref{Tbl Convergence Ex3}, which shows better behavior for the more developed stage $n=8$ over the stage $n=4$, not only in terms of centrality, but also in terms of deviation. 

Finally, the solutions for some realizations are presented in \textsc{Figure} \ref{Fig Solutions Ex3}, where the microstructure is $\B_{6}$. Clearly, the random realizations' solutions are similarly shaped to its average value as expected.
\begin{figure}[h!] 
\vspace{-1.3cm}
        \centering
        \begin{subfigure}[Realization 1: Solution $p^{6}$.]
                { 
                {\includegraphics[scale = 0.45]{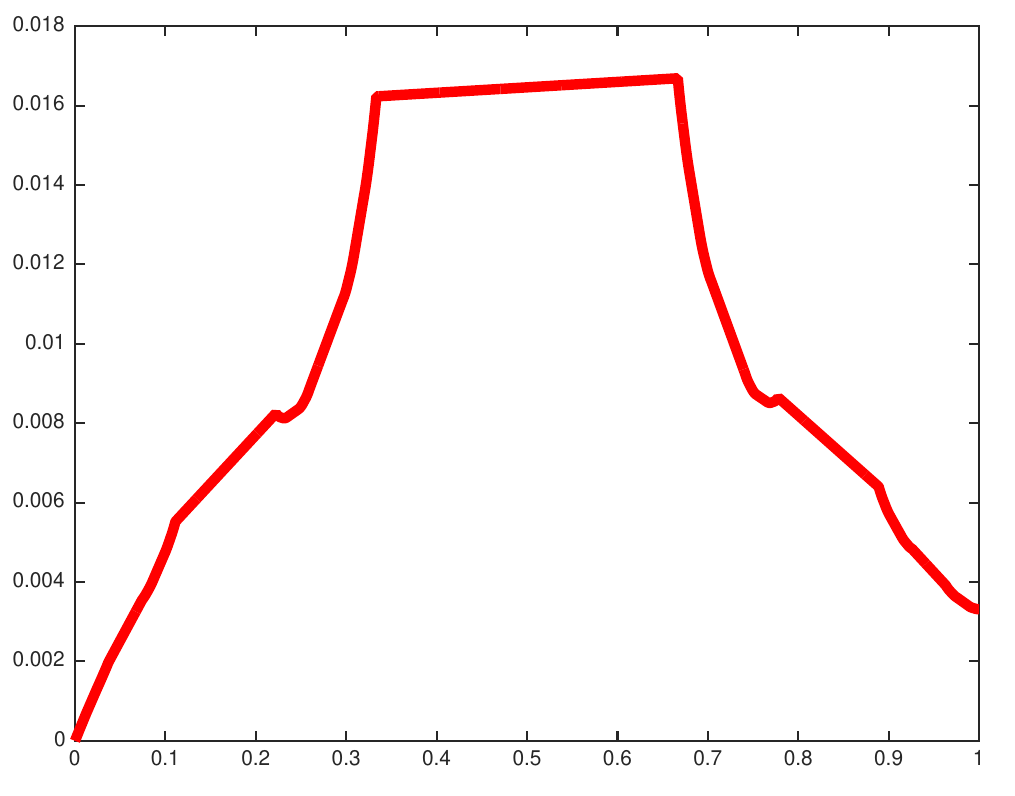} } }
        \end{subfigure}
%
          \begin{subfigure}[Realization 1: Derivative of the Solution $\del p^{6}$.]
                { 
                {\includegraphics[scale = 0.44]{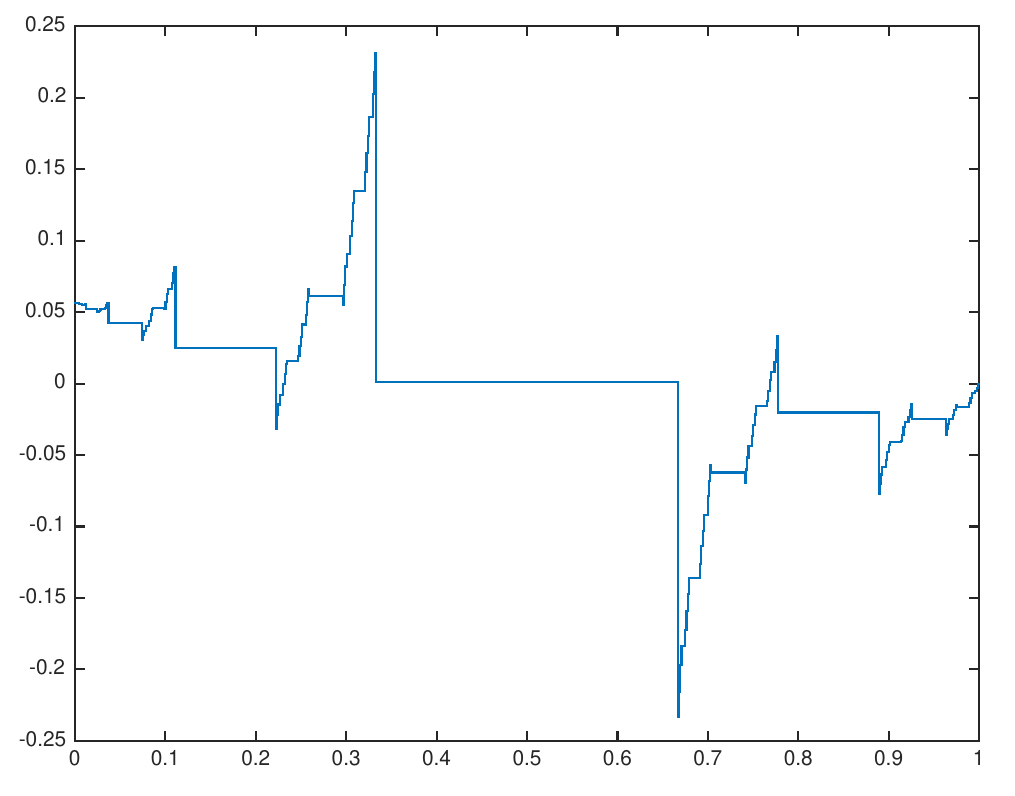} } }                
        \end{subfigure} 
        \\
        \begin{subfigure}[Realization 2: Solution $p^{6}$.]
                { 
{\includegraphics[scale = 0.45]{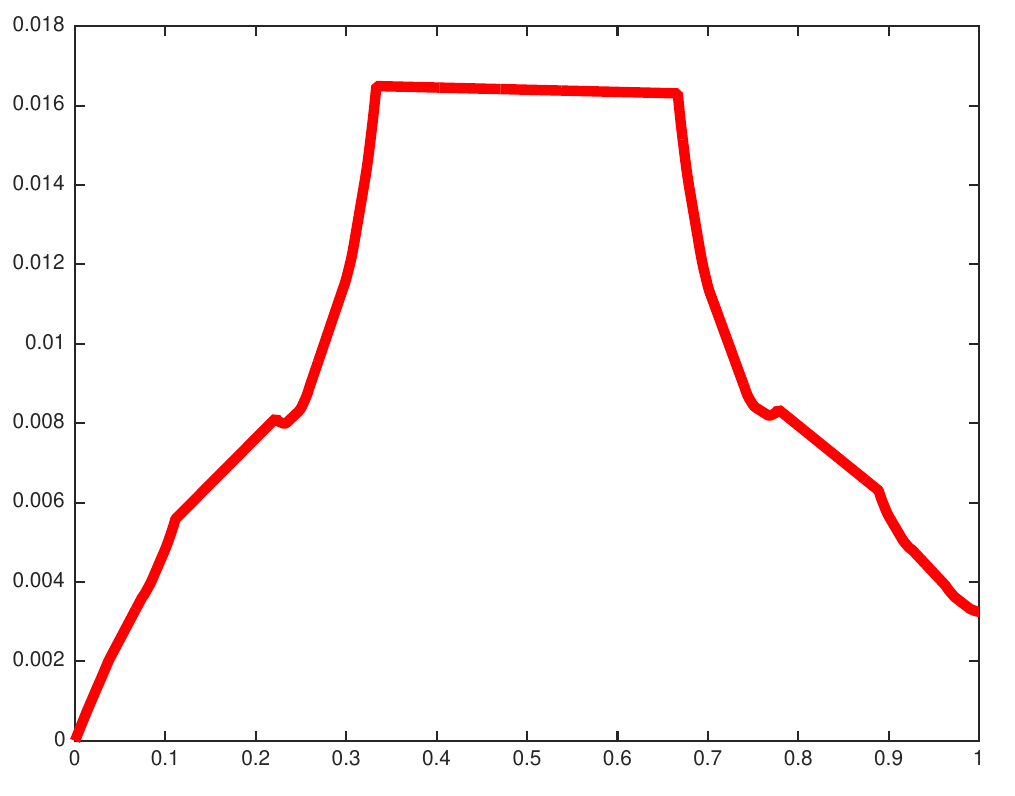} } }
        \end{subfigure} 
%
          \begin{subfigure}[Realization 2: Derivative of the Solution $\del p^{6}$.]
                { 
{\includegraphics[scale = 0.44]{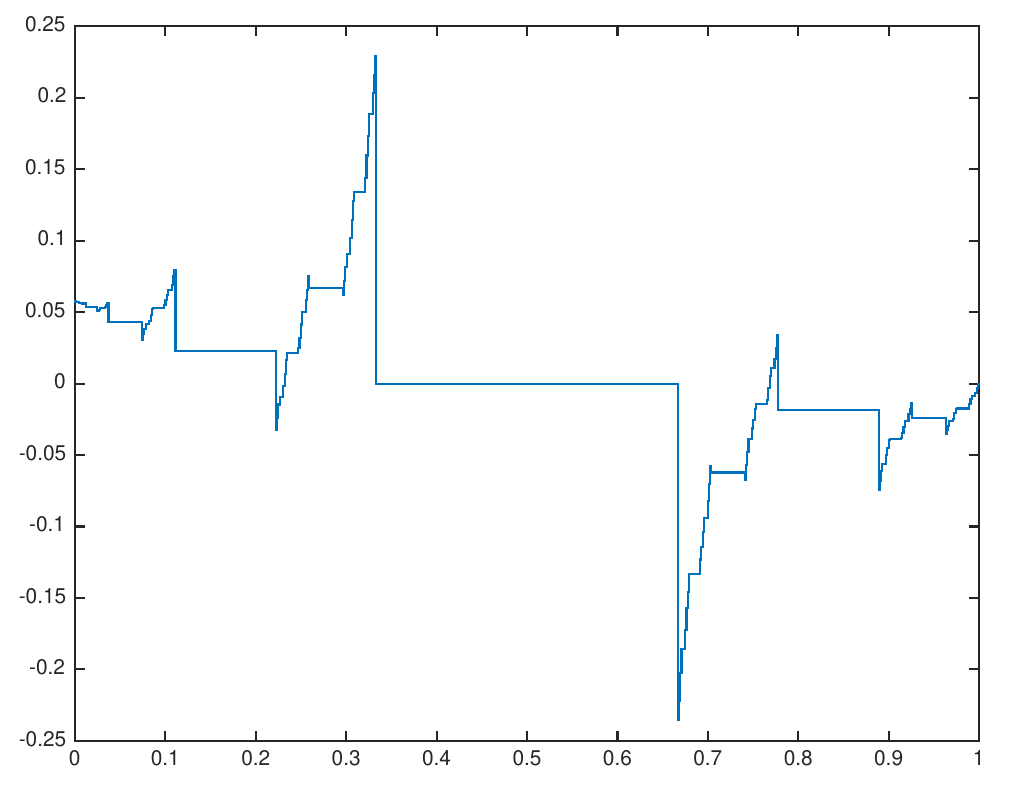} } }   
        \end{subfigure} 
        \\
        \begin{subfigure}[Realization 3: Solution $p^{6}$.]
                { 
{\includegraphics[scale = 0.45]{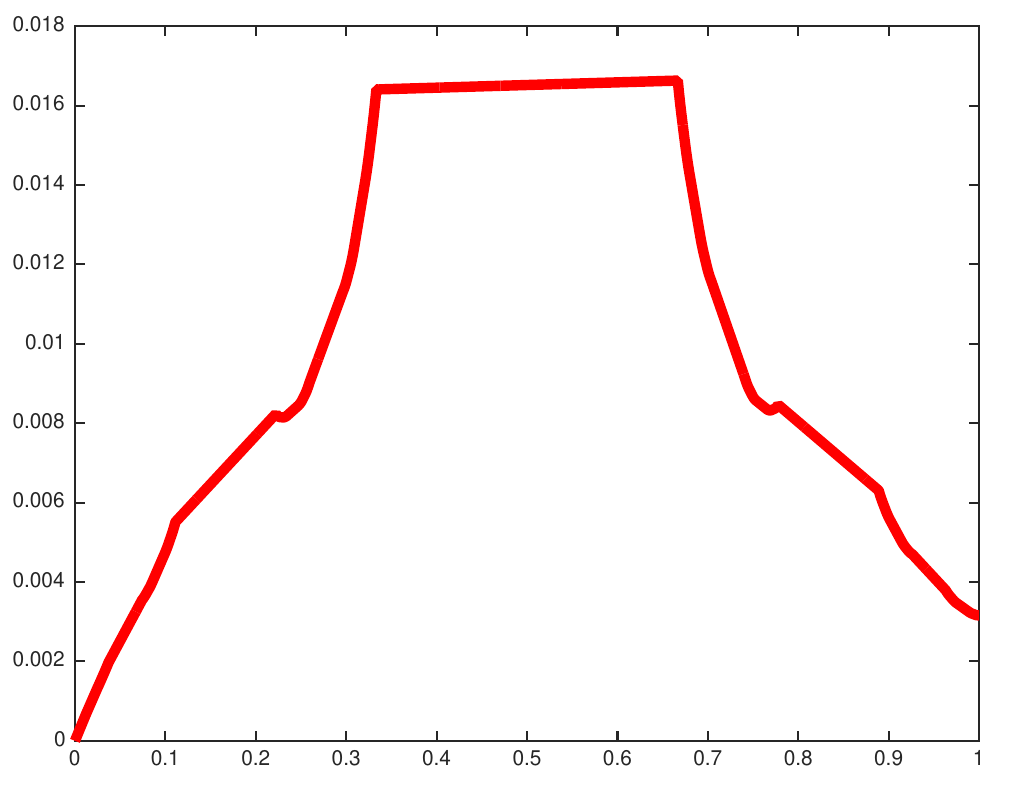} } }
        \end{subfigure} 
%
          \begin{subfigure}[Realization 3: Derivative of the Solution $\del p^{6}$.]
                { 
{\includegraphics[scale = 0.44]{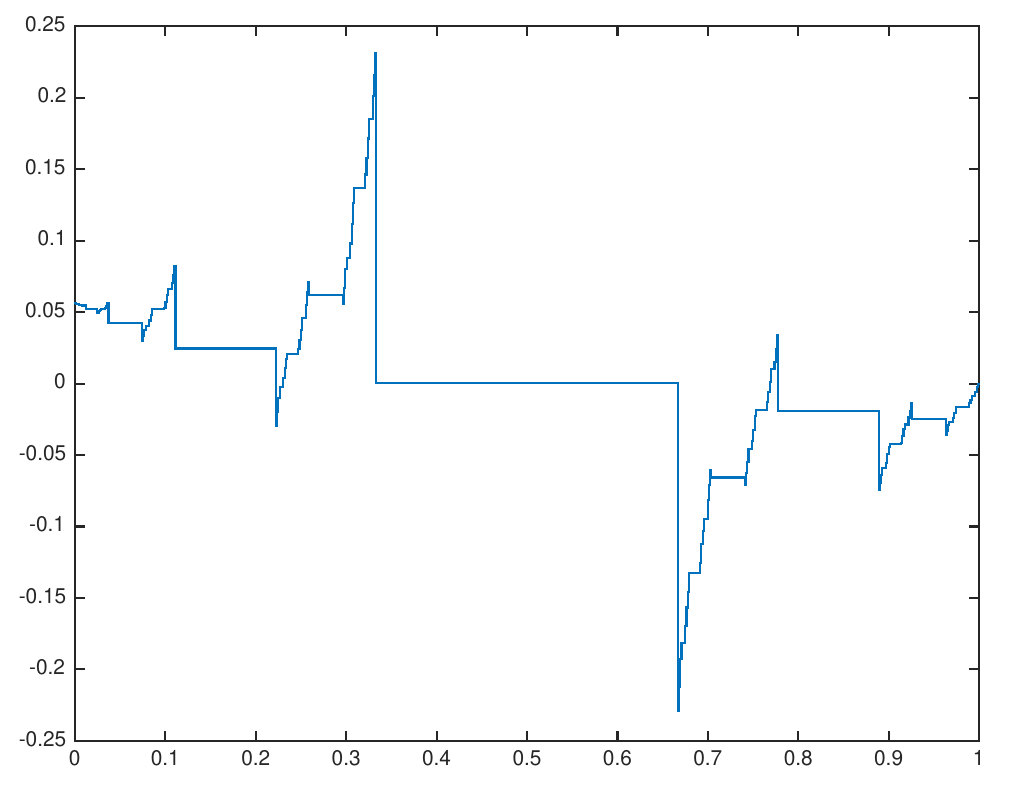} } }               
        \end{subfigure} 
\caption{\textsc{Random Realizations Example} \ref{Ex Random Behavior Example}. Storage coefficient $\beta$ defined in \textsc{Equation}\eqref{Eq Example scaled storage coefficient}, forcing term $F = 0$ and interface forcing term $f$ defined in \eqref{Def Deterministic Interface Forcing Term}. For three random realizations, the functions depicted in figures (a), (c) and (e) are the solutions and figures (b), (d) and (f) are the corresponding derivatives. The microstructure is $\B_{6}$, see \textsc{Equation} \eqref{Def Partial Triadic Extremes Set}. The vertical lines in the derivatives' graphics are included for optical purposes only.}\label{Fig Solutions Ex3}
\end{figure}
\end{example}
%
%
%
%
%
\subsection{Closing Observations}\label{Sec Closing Observations Numerical}
%
%
%
%
%
%
\begin{enumerate}[(i)]
\item 
The Authors tried to find experimentally, a rate of convergence of the type $\Vert p^{n+1}_{i} - p^{n}_{i} \Vert = \mathcal{O}\big(\Vert p^{n}_{i} - p^{n-1}_{i} \Vert^{t_{i}} \big)$, $i = 1, 2$, using the well-know estimate 

\begin{align*}
& t_{i} \sim \frac{\log \Vert  p^{ \, n+1}_{i}- p^{ \, n }_{i}  \Vert - 
\log   \Vert  p^{ \, n}_{i} - p^{ \, n-1}_{i} \Vert }{
\log   \Vert  p^{ \, n}_{i} - p^{ \, n-1}_{i} \Vert
- \log \Vert  p^{ \, n-1}_{i}- p^{ \, n -2}_{i}  \Vert } \, ,&
& i = 1, 2.
\end{align*}
The sampling was made on the sequence of stages $3,4,5,6,7,8,9$; experiments were run for \textsc{Examples} \ref{Ex Basic Example} and \ref{Ex Scaled Example}. However, in both cases, no solid numerical evidence suggesting an order of convergence for the phenomenon was detected. 

\item Additional experiments for the \textsc{Random Behavior Example} \ref{Ex Random Behavior Example} were executed. The probabilistic variations involved were

\begin{enumerate}[(a)]
\item 
Experiments for the stages $n = 3, 5, 6, 7$ and $9$.

\item Experiments with different number of realizations, namely $20$, $60$ and $80$, depending on the computational time demanded.

\item Using a storage coefficient $\beta: \B\rightarrow I$, with different range intervals $I$ as well as uniform and normal distributions for $\beta$.

\item Probabilistic variations of the forcing terms.

\item Combinations of one or more of the previous factors.

\item Execution of the aforementioned probabilistic variations adjusted to the scaled \textsc{Example} \ref{Ex Scaled Example}.   

\end{enumerate} 
In all the cases, convergence behavior was observed as expected. Naturally, the quality of convergence deteriorates depending on the deviation of the distributions as well as the combination of uncertainty factors introduced in the experiment. 

\item All of the previously mentioned scenarios were also executed in the same code for the case of homogeneous Dirichlet boundary conditions on both ends i.e., $\pn(0) = \pn(1) = 0$. As expected convergence behavior is observed, comparable with the corresponding analogous version used along the analysis of this paper i.e., Dirichlet-Neumann boundary conditions $\pn(0) = 0$, $\del \pn(1) = 0$.  

\end{enumerate}
%
%
%
%
%
%
%
%
%
\section{Conclusions and Final Discussion}
%
%
The present work yields several accomplishments as well as limitations listed below.
\begin{enumerate}[(i)]
\item The unscaled storage model presented in \textsc{Section} \ref{Sec A sigma finite Interface Microstructure} is in general not adequate, as it excludes most of the important cases of fractals.

\item The scaled storage model presented in \textsc{Section} \ref{Dec Fractal Scaling Modeling} is suitable for an important number of fractal microstructures. However, the asymptotic variational model \eqref{Pblm Variational Microstructure Good Coefficients} is equivalent to a pointwise strong model \eqref{Pblm Strong Microstructure Good Coefficients}, only if the closure of the microstructure $\cl(\B)$ is negligible i.e., if it has null Lebesgue measure. Such hypothesis excludes important cases, e.g. the family of ``fat" Cantor sets in 1-D or the ``fat" versions of the classic fractal structures in 2-D and/or 3-D (see \cite{Falconer}). 
In these cases, only the ``averaged normal flux" \textsc{Statement} \eqref{Eq strong limit interface accumulation points extra hypothesis} can be concluded for the accumulation points of the microstructure $\B'$.

\item In order to overcome the deficiency previously mentioned, a first approach would be to take the traditional treatment of solving strong forms in fractal domains (as in \cite{Strichartz3, Strichartz1}) and then, try to ``blend" it with the point of view presented here. Such analysis is to be pursued in future research.  

\item The fractal microstructures addressed in this work are self-similar. This requirement is important only for the second model (\textsc{Section} \ref{Dec Fractal Scaling Modeling}) because it scales the storage coefficient $\beta$ (\textsc{Equation} \eqref{Eq scaled storage coefficient}), according to the geometric detail of the structure at every level. In particular,  in order to scale the storage coefficient adequately, accurate estimates of the growth rate of the microstructure $\B$ from one level to the next are needed. 

\item The self-similarity requirement for the microstructure $\B$ in the introduction of the scaled model excludes the important family of the self-affine fractals; this type of microstructures is a topic for future work. On the other hand, the unscaled model of \textsc{Section} \ref{Sec A sigma finite Interface Microstructure} does not require such detailed knowledge of the microstructure because it avoids scaling. Although it is likely to be unsuited for the analysis of self-affine microstructures it may be a good starting point, when looking for the needs to model this case.    

\item It is important to observe the relevance of self-similarity versus the fractal dimension in the scaled model. While the self-similarity of the microstructure is the corner stone of the storage coefficient scaling, introduced in \textsc{Equation} \eqref{Eq scaled storage coefficient} (hence, it can not be given up), the model is more flexible with respect to the fractal dimension of $\B$, as long as it is not the same of the ``host" domain $\Omega$.

\item We stress that the input needed by the present result, in terms of geometric information on the fractal structure, does not have to be as detailed as in the strong forms PDE analysis on fractals. 

\item The random experiments presented in \textsc{Example} \ref{Ex Random Behavior Example}, as well as those only mentioned in \textsc{Subsection} \ref{Sec Closing Observations Numerical}, furnish solid numerical evidence of good behavior for probabilistic versions of the unscaled and the scaled models respectively. Additionally, it is important to handle certain level of uncertainty because, having a deterministic description of the fractal microstructure is a very strong hypothesis to be applicable in realistic scenarios. In the Authors' opinion, this is justification enough to pursue rigorous analysis of these problems, which will be addressed in future work.

\item In \textsc{Example} \ref{Ex Random Behavior Example}, uncertainty was introduced in the storage coefficient $\beta$, or the forcing term $f$, however the geometry of the microstructure was never randomized. In several works (e.g. \cite{GrafS, HutchinsonRuschendorf}) the self-similarity is replaced by the concept of statistical self-similarity, in the sense that scaling of small parts have the same statistical distribution as the whole set. Clearly, this random property is consistent with the scaling of $\beta$ \eqref{Eq scaled storage coefficient} in \textsc{Definition} \ref{Def scaled storage coefficient}. Consequently, the statistical self-similarity is a future line of research, in order to address geometric uncertainty of the fractal microstructure $\B$. In particular, the fractal percolation microstructures, are of special interest for real world applications, see \cite{ChayesDurret, DekkingMeester}.

\item The execution of all the numerical experiments shows that the code becomes unstable beyond the 9th stage of the Cantor set construction. This suggests that in order to overcome these limitations, an adaptation of the FEM method has to be developed, targeted to the microstructure of interest. This aspect is to be analyzed in future research.

\item 
The study of fractal microstructures in 2-D and 3-D are necessary for practical applications and in higher dimensions for theoretical purposes. Since passing from one dimension to two or more dimensions, increases significantly the level of complexity in the microstructure and in the equation, considerable challenges are to be expected in this future research line. 

\item Another important analysis to be developed is the study of the models both, scaled and unscaled, in the mixed-mixed variational formulation introduced in \cite{MoralesShow2}. On one hand, this approach allows great flexibility for the underlying spaces of velocity and pressure, which can constitute an advantage with respect to the treatment presented here. On the other hand, the mixed-mixed variational formulation allows modeling fluid exchange conditions across the interface of greater generality than the conditions in \eqref{Eq strong n-stage interface}, used in the present work; this advantage can contribute significantly to the development of the field.

\end{enumerate}
%
%


%
%
\section*{Acknowledgements}
The Authors wish to acknowledge Universidad Nacional de Colombia, Sede Medell\'in for its support in this work through the project HERMES 27798. The authors also wish to thank Professor Ma\l{}gorzata Peszy\'nska from Oregon State University, for authorizing the use of code \textbf{fem1d.m} \cite{PeszynskaFEM} in the implementation of the numerical experiments presented in \textsc{Section} \ref{Sec Numerical Experiments}. It is a tool of remarkable quality, efficiency and versatility, which has contributed significantly to this work.
%
%
%
%
%
%
%

%
%
\end{document}